\documentclass{article}
\usepackage[utf8]{inputenc}
\usepackage{amsmath, amssymb, enumerate, multicol, chngcntr}
\usepackage{amsthm}
\usepackage{xcolor}
\usepackage{pdfpages}
\usepackage{comment}
\usepackage{caption}
\usepackage{bm}
\usepackage{mathtools}

\theoremstyle{plain}
\newtheorem{thm}{Theorem}[section]
\newtheorem{cor}[thm]{Corollary}
\newtheorem{prop}[thm]{Proposition}
\newtheorem{lemm}[thm]{Lemma}
\newtheorem{rem}{Remark}

\counterwithin{equation}{section}

\usepackage{fancyhdr}
\usepackage{setspace}

\usepackage[english]{babel}
\usepackage[euler]{textgreek}
\usepackage{stmaryrd}

\usepackage{tikz}
\usetikzlibrary{shapes,arrows}
\usepackage{tikz-cd}
\usepackage{amssymb}
\usepackage{graphicx}
\graphicspath{{Images/}}
\usepackage[font=small,labelfont=bf]{caption}
\usepackage{pgf,tikz,pgfplots}
\pgfplotsset{compat=1.15}
\usepackage{mathrsfs}
\usepackage{diagbox}
\usepackage{float}
\usepackage{xcolor}
\usepackage{ragged2e}
\usepackage{enumerate}
\usepackage{upgreek}
\usepackage{multicol}

\theoremstyle{remark}

\theoremstyle{definition}

\usepackage[hidelinks]{hyperref}
\usepackage{bookmark}
\usepackage{cleveref}

\hypersetup{
	colorlinks   = true, 
	urlcolor     = blue, 
	linkcolor    = blue, 
	citecolor   = red, 
	citecolor   = blue,
}

\theoremstyle{plain}

\newtheoremstyle{note}
{3pt}
{3pt}
{}
{}
{\itshape}
{:}
{.5em}
{}

\newtheoremstyle{citing}
{3pt}
{3pt}
{\itshape}
{}
{\bfseries}
{.}
{.5em}
{\thmnote{#3}}

\theoremstyle{citing}

\newtheoremstyle{break}
{9pt}
{9pt}
{\itshape}
{}
{\bfseries}
{.}
{\newline}
{}

\let\lvert=|\let\rvert=|

\title{On the Quartic-free $A$-groups}
\author{Prashun Kumar \footnote{Corresponding author, Dr. B. R. Ambedkar University Delhi, Delhi 110006, India; \ E-mail:  prashun07kumar@gmail.com.}}

\begin{document}
	\fontfamily{cmr}\selectfont
	
	\maketitle
	
	\bigskip
	\noindent
	{\small{\bf ABSTRACT:}} A finite group is said to be quartic-free if its order is not divisible by $p^4$ of any prime $p$. A finite group is called an $A$-group if all of its Sylow subgroups are abelian. Objective of this paper is to provide explicit structure of a quartic-free $A$-group. Further in the process of providing the explicit structure we also determine the derived length of a solvable quartic-free $A$-group.
	
	\medskip
	\noindent
	{\small{\bf Keywords}{:}}
	quartic-free groups, $A$-groups, general linear groups, solvable groups, non-solvable groups, nilpotent groups.
	
	\medskip
	\noindent
	{\small{\bf Mathematics Subject Classification-MSC2020}{:} }
	20E28, 20E34, 20E45, 20F99. 
	
	\baselineskip=\normalbaselineskip
    \section{Introduction}
    In 1893, Otto H$\ddot{\text{o}}$lder described groups of order $p^3$ and $p^4$. Soon after, he arrived at a formula for the number of groups of order $n$ using the structure of groups of order $n$ when $n$ is square-free, that is, the square of no prime divides $n$.  (see \cite{H1893}, \cite{H1895}). Result of H$\ddot{\text{o}}$lder, Burnside and Zassenhaus \cite[Theorem 10.1.10]{R1995} shows that every finite group with cyclic Sylow subgroups of order $n$ is metacyclic with odd-order derived subgroup $G' \cong \mathbb{Z}_m$ and cyclic quotient $G/G'$ of order $l = n/m$.
    
    \medskip
    The structure of cube-free groups was studied by Heiko Dietrich and Bettina Eick. They also presented an algorithm to construct cube-free groups up to isomorphism of a given order using ${\rm GAP}$ (see \cite{DE2005}). Later, S. Qiao and C. H. Li gave a more explicit structure of cube-free groups (see \cite{QL2011}). 
    
    \medskip
    Objective of this paper is to expand the work of S. Qiao and C. H. Li to the quartic-free $A$-groups and obtain the explicit structure of quartic-free $A$-groups. Further we would like to point out that though the paper is specifically oriented on quartic-free $A$-groups however the methodology developed in this paper may allow to obtain the structure of $A$-groups of higher degree, that is $A$-groups whose orders are not divisible by $p^n$ for every prime $p$ where $n\geq 5$.  
    
    \medskip
    Throughout the paper, $p$ is a prime, $q$ is a power of $p$ and $\mathbb{F}_q$ is the finite field of order $q$. Let $D(n,q)$ denote the subgroup of diagonal matrices of $\mbox{GL}(n,q)$. Let $M(n,q) = D(n,q) \rtimes S_n$ be the subgroup of monomial matrices in ${\rm GL}(n,q)$. Let $N(n,q)$ be  the normaliser of $S(n,q)$ where $S(n,q) \cong \mathbb{Z}_{q^n-1}$ is a Singer cycle in ${\rm GL}(n,q)$. The Borel subgroup $B(n,q)$ of ${\rm GL}(n,q)$ is defined as

   \begin{equation*}B(n,q) = 
   	 \left\{
   	\begin{pmatrix}
   		a_{11} & a_{12} & \cdots &  a_{1n} \\
   		  0    & a_{22} & \cdots &  a_{2n} \\
   		\vdots & \vdots & \ddots &  \vdots \\
   		  0    &   0    & \cdots &  a_{nn}
   	\end{pmatrix}
   	\mid a_{ii} \in {\mathbb{F}_q}^*, a_{ij} \in \mathbb{F}_q \right \}
   \end{equation*}
    
    \medskip
    Note that $B(2,q) \cong \mathbb{Z}_q \rtimes (\mathbb{Z}_{q-1} \times \mathbb{Z}_{q-1})$.
    
    \medskip
    Let $G$ be a group. We denote ${\rm sol}(G)$ to be the maximal solvable normal subgroup of $G$.

    \medskip
    Now we shall state the main results of our paper.
     
    \begin{thm}[{\bf Main Theorem 1}]\label{Main_theorem}
    	Let $G$ be a solvable quartic-free $A$ group. Then $G = A \rtimes ((B \rtimes (C \rtimes D)))$ where $A, B, C, D$ are abelian quartic-free subgroups $G$.
    	
    \end{thm}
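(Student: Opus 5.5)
The plan is to build the decomposition from the top of the derived series of $G$, using the fact that solvable $A$-groups have Fitting subgroup equal to a direct product of abelian Sylow subgroups.

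\textbf{Step 1: the Fitting subgroup.} Let $F=F(G)$. Since $G$ is an $A$-group, $F$ is nilpotent with abelian Sylow subgroups, hence abelian. Since $G$ is solvable, $C_G(F)\le F$, so $G/F$ embeds in $\mathrm{Aut}(F)=\prod_p \mathrm{Aut}(F_p)$. Each $F_p$ is abelian of order at most $p^3$, so $\mathrm{Aut}(F_p)$ is a subgroup of ${\rm GL}(3,p)$ (elementary abelian case) or is smaller (non-elementary case).

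\textbf{Step 2: splitting off $A$.} I would take $A=F$, or more precisely a suitable normal Hall-type abelian subgroup containing $G'\cap F$. The standard fact I would use is Taunt's theorem: in a solvable $A$-group, $F(G)\cap Z(G)=1$ implies that $F(G)$ is complemented, via Gaschütz's theorem for abelian normal subgroups. To arrange this, write $F=[F,G]\times C_F(G)$ (coprime action of a Hall complement on each $F_p$). Take $A=[F,G]$ together with any Sylow pieces on which $G$ acts coprimely. Then $A\cap Z(G)=1$ and the Gaschütz/Schur--Zassenhaus argument gives a complement $H$ with $G=A\rtimes H$. The group $H\cong G/A$ is again a solvable quartic-free $A$-group of smaller derived length, since $A$ contains the last nontrivial term $G^{(d-1)}$ by Taunt's theorem.

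\textbf{Step 3: induction via the derived-length bound.} Iterating Step 2 gives $G=A_1\rtimes(A_2\rtimes(\cdots\rtimes A_d))$ with each $A_i$ abelian, where $d$ is the derived length. Each $A_i$ is quartic-free as a subgroup of $G$. This is the same mechanism as Taunt's result that an $A$-group of derived length $d$ is an iterated semidirect product of $d$ abelian groups. Hence it suffices to prove that $d\le 4$.

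\textbf{Step 4, the main obstacle: $d\le 4$.} Since $A$-groups are closed under subgroups and quotients, $G/F$ is an $A$-group embedded in $\prod_p \mathrm{Aut}(F_p)$, and it is enough to bound the derived length of solvable $A$-subgroups of ${\rm GL}(3,p)$, ${\rm GL}(2,p)$, ${\rm GL}(1,p)$, and of automorphism groups of $\mathbb{Z}_{p^2}\times\mathbb{Z}_p$, $\mathbb{Z}_{p^3}$, $\mathbb{Z}_{p^2}$.

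For ${\rm GL}(3,p)$, I would argue by irreducibility type, restricting to subgroups whose $p$-part is abelian and whose other Sylow subgroups are abelian:
\begin{itemize}
\item irreducible primitive: the subgroup lies in $N(3,p)$, which is metabelian;
\item imprimitive: the subgroup lies in $M(3,p)=D(3,p)\rtimes S_3$, whose $A$-subgroups have derived length at most $3$, since $D$ is abelian and $S_3$ is metabelian;
\item reducible: the subgroup lies in a parabolic subgroup, handled via $B(2,q)$ and $M(2,p)$.
\end{itemize}
Because the $p$-Sylow subgroup of $G/F$ must be abelian, this should give derived length of $G/F$ at most $3$, so $d\le 4$.

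The delicate points are the reducible case, where unipotent radicals interact with the Levi part, and checking that non-elementary $F_p$ gives nothing worse. I would try first to show that in each case the derived length of an $A$-subgroup is at most $3$ by exhibiting an abelian normal subgroup with metabelian quotient, possibly needing to pass to a suitable Hall subgroup. This yields four abelian factors $A,B,C,D$, padded with trivial groups if $d<4$.
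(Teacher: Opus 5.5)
\textbf{The gap is in Step 2.} The ``standard fact'' you attribute to Taunt is not his theorem, and it is false. Taunt's theorem says $G'\cap Z(G)=1$ and that $G'$ is complemented by a system normaliser. It does not say that an abelian normal $A$ with $A\cap Z(G)=1$ is complemented. Gasch\"utz gives a complement only when each $A_p$ is complemented in a Sylow $p$-subgroup of $G$, and nothing forces $O_p(G)$ to be a direct factor of $P$. Schur--Zassenhaus does not apply either, since $|A|$ and $[G:A]$ need not be coprime.

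Here is a counterexample. Let $D_{18}=\langle x,y\rangle$ act on $W=\mathbb{F}_5^2$ through $D_{18}\to D_{18}/\langle x^3\rangle\cong S_3$ followed by the $2$-dimensional irreducible representation of $S_3$. Put $G=W\rtimes D_{18}$, a solvable quartic-free $A$-group of order $2\cdot 3^2\cdot 5^2$. Then $F(G)=W\times\langle x^3\rangle$, $Z(G)=1$, and $[F(G),G]=F(G)=F(G)\cap G'$. So every choice of $A$ that your Step 2 allows equals $F(G)$. But $F(G)$ has no complement. A complement would have order $6$ and so contain an element of order $3$ outside $F(G)$. However, every element of order $3$ lies in a conjugate of the cyclic Sylow subgroup $\langle x\rangle\cong\mathbb{Z}_9$, whose unique subgroup of order $3$ is the normal subgroup $\langle x^3\rangle\le F(G)$. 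So the iteration in Step 3 never gets started.

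\textbf{The repair.} Peel off only the last derived term $A=G^{(d-1)}$, which, as you note, lies in $F\cap G'$.
\begin{enumerate}
\item By Taunt, $G^{(i)}=G^{(i+1)}\rtimes N_{i+1}$ with $N_{i+1}$ an abelian system normaliser of $G^{(i)}$.
\item $N_{i+1}$ normalises a Sylow $p$-subgroup of $G^{(i)}$, and Sylow subgroups are abelian. Hence $P\cap G^{(i+1)}$ is a direct factor of $P\cap G^{(i)}$ for every Sylow $p$-subgroup $P$ of $G$.
\item Chaining these, $P\cap G^{(d-1)}$ is a direct factor of $P$. Gasch\"utz, applied prime by prime, then gives a complement $H\cong G/G^{(d-1)}$, and you can induct on $H$.
\end{enumerate}
In the example, $G^{(2)}=W$, with complement $D_{18}$.

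\textbf{Comparison with the paper.} The paper instead uses Taunt's splitting $G=G'\rtimes N_1$ directly and iterates inside $G'$. That needs no Gasch\"utz argument, but it yields the nesting $((G'''\rtimes N_3)\rtimes N_2)\rtimes N_1$. Your top-down peeling, once repaired, gives the right-nested form actually written in the statement.

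\textbf{Step 4.} Your Step 4 matches the paper's reduction to $d(G/F)\le 3$. It gets easier once you note that an abelian Sylow $p$-subgroup contains and centralises $O_p(G)$. So $G/C_G(O_p(G))$ is a $p'$-group acting faithfully and completely reducibly on $\Omega_1(O_p(G))$, and the unipotent case you flag as delicate never arises.
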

    
    \begin{thm}[{\bf Main Theorem 2}]{\label{Main_theorem_2}}
    	Let $G$ be a non-solvable quartic-free $A$-group. Then $G = L \rtimes S$ where $L$ is a solvable and $S$ is a simple. Further if $S$ acts non trivially on $L$ then $5 \mid |S|$ and $S \cong A_5$.  
    \end{thm}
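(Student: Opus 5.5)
Let $R={\rm sol}(G)$. The plan is to study $\bar G = G/R$, which has trivial solvable radical. Then $\bar G$ is an $A$-group whose order divides that of $G$, so it is quartic-free.

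\textbf{Step 1: $\bar G$ is simple.} Its socle is a direct product of nonabelian simple groups $T_1\times\cdots\times T_k$. Every nonabelian simple group has even order, and by Walter's classification of simple groups with abelian Sylow $2$-subgroups its Sylow $2$-subgroup has order at least $4$. So $2^{2k}$ divides $|\bar G|$, and quartic-freeness forces $k=1$.

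Walter's list is ${\rm PSL}(2,q)$ with $q\equiv 3,5 \pmod 8$ or $q=2^n$, $J_1$, and the Ree groups ${}^2G_2(3^{2n+1})$. The conditions "abelian Sylow subgroups" and "quartic-free" then cut this down to ${\rm PSL}(2,q)$ for suitable $q$:
\begin{itemize}
\item $q=2^n$ with $n\le 3$, and $q$ odd with $q\le p^3$;
\item the Sylow $3$-subgroup of the Ree groups is nonabelian, so they are excluded;
\item $J_1$ remains possible.
\end{itemize}
Next, $\bar G$ embeds in ${\rm Aut}(T)$, and ${\rm Aut}(T)/T$ is built from diagonal and field automorphisms of small order. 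A nontrivial extension would either create a nonabelian Sylow subgroup (for example ${\rm PGL}(2,q)$ has dihedral Sylow $2$-subgroups) or push the power of some prime to $p^4$. So $\bar G=T=:S$ is simple. This case check is the first place where care is needed, but it is finite and explicit.

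\textbf{Step 2: splitting.} We want a complement to $R=L$ in $G$. Since $G$ is an $A$-group, the Sylow subgroups of $L$ are abelian. I would argue by induction on $|L|$, reducing to the case where $L$ is a minimal normal subgroup, hence an elementary abelian $p$-group of rank at most $3$.
\begin{itemize}
\item If $p\nmid |S|$, Schur--Zassenhaus gives a complement.
\item If $p \mid |S|$, use Gasch\"utz's theorem: $G$ splits over the abelian normal $p$-subgroup $L$ if and only if a Sylow $p$-subgroup $P$ splits over $L$. Since $P$ is abelian, we need $P\cong L\times P/L$ as abelian groups. I would argue this via the quartic bound: $|P|\le p^3$ and $p\mid |S|$, together with homocyclicity-type constraints on $P$ coming from the $S$-action.
\end{itemize}
For the induction, the complement $H$ of $L/L_0$ in $G/L_0$ contains $L_0$ with $H/L_0\cong S$. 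Applying the same argument to $H$ gives $H=L_0\rtimes S$, hence $G=L\rtimes S$.

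\textbf{Step 3: nontrivial action forces $S\cong A_5$.} Suppose $S$ acts nontrivially on $L$. Then it acts nontrivially on some chief factor $V$, an elementary abelian $r$-group of rank $m\le 3$. So $S$ embeds in ${\rm GL}(m,r)$ with $m\le 3$. Nonabelian simple subgroups of ${\rm GL}(2,r)$ are ${\rm SL}(2,\cdot)$-type, and only ${\rm SL}(2,5)\supset$ quotient $A_5$ occurs, but ${\rm SL}(2,5)$ has a quaternion Sylow $2$-subgroup, so inside ${\rm GL}(2,r)$ only $S\cong{\rm PSL}(2,r)$ with $r\le 3$ is possible, which is impossible, or the case $r=2$. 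For ${\rm GL}(3,r)$, the possible simple subgroups are $A_5\cong{\rm PSL}(2,5)$ (via $\Omega_3$), ${\rm PSL}(2,7)$, ${\rm PSL}(3,r)$, and a few others. Comparing $|V|\cdot|S|$ against quartic-freeness and abelian Sylows kills every candidate except $A_5$. For example ${\rm PSL}(2,7)$ in ${\rm GL}(3,2)$ gives $2^3\cdot 2^3$, which is too many $2$'s. Similarly ${\rm PSL}(3,r)$ has a nonabelian Sylow $r$-subgroup. Finally $5\mid |A_5|$.

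The main obstacle I expect is Step 2 when $p$ divides $|S|$: verifying the Gasch\"utz splitting of the abelian Sylow subgroup. If the direct argument fails, I would fall back to Step 3's classification. Once the action is trivial on chief factors with $p\mid|S|$, the Schur multiplier of $S$ controls the extension, and the $A$-group condition rules out covering groups such as ${\rm SL}(2,q)$, whose Sylow $2$-subgroup is quaternion.
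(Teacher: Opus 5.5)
\textbf{Gap in Step 2.} The gap is in Step 2 in the case $p\mid |S|$, which is the real content of the splitting statement. Gasch\"utz's theorem only converts the question into whether the abelian Sylow $p$-subgroup $P$ splits over $L$. Neither the quartic bound nor the $S$-action decides that. For example, a putative extension of $A_5$ by a central $\mathbb{Z}_5$ with Sylow $5$-subgroup $\mathbb{Z}_{25}$ has quartic-free order $300$ and abelian Sylow subgroups, and only a global argument excludes it. There are also no ``homocyclicity constraints from the $S$-action'' to use, because in this case $S$ acts trivially on $L$.

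The missing idea is the centralizer dichotomy the paper uses. $L$ lies in every Sylow $p$-subgroup $P$, and $P$ is abelian, so $P\le C_G(L)$. Since $C_G(L)/L$ is normal in the simple group $G/L$, there are two cases:
\begin{itemize}
\item $C_G(L)=L$. This forces $P=L$, hence $p\nmid |S|$, and Schur--Zassenhaus applies.
\item $C_G(L)=G$, i.e.\ $L\le Z(G)$. Then $G=G'L$ because $G/L$ is perfect. The transfer fact that an $A$-group satisfies $G'\cap Z(G)=1$ (the result the paper quotes from Robinson) gives $G=L\times G'$ with $G'\cong S$.
\end{itemize}
Equivalently, the transfer $G\to P$, restricted to $P$, has kernel a complement to $L$ in $P$, which is exactly what your Gasch\"utz route needs and does not supply.

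\textbf{The Schur-multiplier fallback.} This can be made rigorous. A non-split central extension of $S$ by $L\cong\mathbb{Z}_p$ would be perfect, hence a quotient of the Schur cover. For the groups on your list the multiplier is $\mathbb{Z}_2$ or trivial, which leaves only ${\rm SL}(2,q)$, with quaternion Sylow $2$-subgroup. But this route presupposes that $L$ is central, which in your plan comes only out of the linear-group analysis of Step 3. It also needs the multiplier of every candidate $S$. The one-line centralizer argument avoids both.

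\textbf{Steps 1 and 3.} These address points the paper's proof passes over: it asserts that a trivial solvable radical forces $G$ simple, and it does not argue the $A_5$ clause. Both steps need tightening.
\begin{itemize}
\item In Step 1, order counting does not dispose of ${\rm PSL}(2,8).3$ or ${\rm PSL}(2,p^3).3$ with $p\neq 3$. There you must show the Sylow $3$-subgroup is nonabelian. The outer elements normalizing the torus that contains the cyclic Sylow $3$-subgroup of ${\rm PSL}(2,q)$ act on it by $x\mapsto x^{\pm p^i}$ with $3\nmid i$, and this is nontrivial on its $3$-part.
\item In Step 3, the reason ${\rm GL}(2,r)$ has no nonabelian simple subgroup is this: such a subgroup is perfect, hence lies in ${\rm SL}(2,r)$, whose unique involution is central for odd $r$; and ${\rm GL}(2,2)\cong S_3$. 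Your sentence about ${\rm SL}(2,5)$ and ${\rm PSL}(2,r)$ with $r\le 3$ does not establish this.
\item For $m=3$ the quartic bound forces $r\nmid |S|$. So the candidates are the simple groups with a faithful complex character of degree $3$, namely $A_5$ and ${\rm PSL}(2,7)$. The latter is already excluded because it has dihedral Sylow $2$-subgroups; your ${\rm GL}(3,2)$ count handles only one of its embeddings.
\item Passing from a nontrivial action on $L$ to a nontrivial action on some chief factor needs the fact that the stability group of a series is nilpotent.
\end{itemize}
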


    \section{Solvable quartic-free $A$-groups}
    In this section we prove Theorem \ref{Main_theorem}. In the process of proving the result we classify solvable quartic-free $A$-subgroups of ${\rm GL}(2,q)$ and solvable quartic-free $p'$ $A$-subgroups of ${\rm GL}(3,q)$. Additionally we also provide the structure of solvable cube-free group of a given order. 
    
    \medskip
    The proof of Theorem \ref{Main_theorem} is divided into several results. We begin our proof by stating a standard result. 
     
    \begin{lemm}
    	Let $G$ be a solvable group and let $d(G)$ denote the derived length of $G$. Then
    	\begin{enumerate}[$(\rm i)$]
    		\item if $H \leq G$ then $d(H) \leq d(G)$.
    		\item if $N \trianglelefteq G$ then $ d(G) \leq d(G/N) + d(N)$
    		\item $d(H_1 \times \cdots \times H_k) = max_{1\leq i \leq k}\{d(H_i)\}$ where $H_i$'s are solvable. 
    	\end{enumerate}
    \end{lemm}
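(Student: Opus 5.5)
All three parts come from the derived series $G^{(0)}=G$, $G^{(i+1)}=[G^{(i)},G^{(i)}]$, with $d(G)$ the least $n$ such that $G^{(n)}=1$. The plan is to prove one compatibility property of the derived series for each part, and then read off the inequality or equality.

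For (i), we first show by induction on $i$ that $H^{(i)}\le G^{(i)}$ for $H\le G$. The case $i=0$ is trivial. If $H^{(i)}\le G^{(i)}$, then every commutator of elements of $H^{(i)}$ is a commutator of elements of $G^{(i)}$, so $H^{(i+1)}\le G^{(i+1)}$. Taking $i=d(G)$ gives $H^{(d(G))}=1$. Hence $H$ is solvable and $d(H)\le d(G)$.

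For (ii), let $\pi\colon G\to G/N$ be the natural map. We show by induction that $\pi(G^{(i)})=(G/N)^{(i)}$; this holds because the image of a commutator subgroup under a surjective homomorphism is the commutator subgroup of the image. Put $m=d(G/N)$. Then $\pi(G^{(m)})=1$, so $G^{(m)}\le N$. By the argument of (i), applied to the subgroup $G^{(m)}$ of $N$, we get $G^{(m+k)}=(G^{(m)})^{(k)}\le N^{(k)}$ for all $k$. Taking $k=d(N)$ gives $G^{(m+d(N))}=1$, that is, $d(G)\le d(G/N)+d(N)$.

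For (iii), we show that $(H_1\times\cdots\times H_k)^{(i)}=H_1^{(i)}\times\cdots\times H_k^{(i)}$. This holds because commutators are computed componentwise, and the commutator subgroup of a direct product is the direct product of the commutator subgroups: the generators $[(x_j),(y_j)]$ lie in the product, and each factor's commutators are realised inside the product. Induction on $i$ then gives the identity. Consequently $G^{(i)}=1$ if and only if $H_j^{(i)}=1$ for all $j$, which yields $d=\max_j d(H_j)$.

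The only step needing any care is the componentwise description of the commutator subgroup of a direct product in (iii); everything else is routine induction.
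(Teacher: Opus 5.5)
Your proof is correct. The three facts you use are exactly what is needed: $H^{(i)}\le G^{(i)}$ for $H\le G$; $\pi(G^{(i)})=(G/N)^{(i)}$, which forces $G^{(d(G/N))}\le N$; and the componentwise identity $(H_1\times\cdots\times H_k)^{(i)}=H_1^{(i)}\times\cdots\times H_k^{(i)}$. The paper gives no proof and just calls the lemma standard, and your argument is the standard one.
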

    
     Now we recall an important result by D. Taunt (see \cite{T1949}).
    
    \begin{lemm}{\label{Strctre_of_sol_A_grps}}
    	Let $G$ be a solvable $A$-group. Then $G = G' \rtimes N_1$ where $N_1$ is the system normalizer of $G$.
    \end{lemm}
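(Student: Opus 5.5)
We need to prove Taunt's result: for a solvable $A$-group $G$, $G = G' \rtimes N_1$ with $N_1$ a system normalizer. The plan has three steps: show $G' \cap N_1 = 1$, show $G = G'N_1$, and note that $G' \trianglelefteq G$ makes the product semidirect.

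Fix a Sylow system $\{P_1,\dots,P_k\}$ of $G$, one Sylow $p_i$-subgroup for each prime, pairwise permutable. Let $N_1 = \bigcap_i N_G(P_i)$ be its system normalizer. I will use the standard facts about system normalizers, due to Hall. First, $N_1$ is nilpotent. Second, $N_1$ covers every central chief factor of $G$ and avoids every eccentric one. In particular $N_1$ covers $G/G'$, since every chief factor of the abelian group $G/G'$ is central. This gives $G = G'N_1$ directly. Because $G' \trianglelefteq G$, the product is semidirect once we show $G' \cap N_1 = 1$. So the whole proof reduces to that intersection.

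To prove $G' \cap N_1 = 1$, I will induct on $|G|$. Let $M$ be a minimal normal subgroup of $G$. It is an elementary abelian $p$-group. The quotient $G/M$ is again a solvable $A$-group, since Sylow subgroups of a quotient are images of Sylow subgroups. The image $N_1M/M$ is a system normalizer of $G/M$, because system normalizers map onto system normalizers. Induction gives $(G/M)' \cap N_1M/M = 1$, that is, $G'M \cap N_1M = M$. Hence $G' \cap N_1 \le M$. Two cases remain.

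\begin{enumerate}[$(\rm a)$]
\item $M \not\le G'$. Then $M \cap G' = 1$ by minimality, so $G' \cap N_1 \le M \cap G' = 1$.
\item $M \le G'$. Here the $A$-hypothesis must be used, and I expect this to be the main obstacle. I need to show that $M$ is an eccentric chief factor, so that $N_1 \cap M = 1$. Suppose instead $M$ is central, so $|M| = p$ and $M \le Z(G)$. Let $P$ be a Sylow $p$-subgroup of $G$; it is abelian and contains $M$. Since $M$ is central, $P$ is also contained in $C_G(M) = G$. Gaschütz's theorem says that an abelian normal $p$-subgroup with a complement in $P$ has a complement in $G$. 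Because $P$ is abelian, $M \le Z(P) \cap G'$, and the focal subgroup/transfer argument (Gaschütz/Taunt) gives $Z(P) \cap G' \cap Z(G) = 1$ when $P$ is abelian. This contradicts $1 \ne M \le G' \cap Z(G)$. So every minimal normal subgroup inside $G'$ is eccentric, $N_1$ avoids it, and $G' \cap N_1 \le N_1 \cap M = 1$.
\end{enumerate}

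For the key fact in case (b), $Z(G) \cap G' \cap P = 1$ for abelian $P$, I will use Burnside-type transfer. The transfer $V: G \to P$ restricted to $Z(G) \cap P$ is $x \mapsto x^{|G:P|}$, which is injective on the $p$-group $Z(G)\cap P$. Since $P$ is abelian, $V$ lands in $P$ and kills $G'$. Therefore $Z(G) \cap P \cap G' = 1$. This makes case (b) clean, and the proof is complete.
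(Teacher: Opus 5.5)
Your proof is correct. The paper gives no argument for this lemma; it only quotes Taunt's 1949 result, so your proposal supplies a proof the paper lacks.

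Your route is the standard one through Hall's theory of system normalizers. Every chief factor $H/K$ with $G'\le K$ is central because $[H,G]\le G'\le K$, so $N_1$ covers all of them and $G=G'N_1$. The $A$-hypothesis enters only in the transfer computation $V(z)=z^{|G:P|}$ for $z\in Z(G)\cap P$. Since $P'=1$, this gives $Z(G)\cap G'\cap P=1$. That forces every minimal normal subgroup inside $G'$ to be eccentric, so $N_1$ avoids it.

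Compared with the bare citation, this shows exactly how abelian Sylow subgroups are used. The cost is that you import two nontrivial theorems of Hall: the cover/avoidance property, and the fact that $N_1M/M$ is a system normalizer of $G/M$. You should cite both explicitly.

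Some tidying would help:
\begin{enumerate}[$(\rm i)$]
\item In case (b), the sentence about $C_G(M)=G$ and the appeal to Gasch\"utz's theorem do no work. You never produce a complement to $M$ in $P$, which Gasch\"utz's theorem would need. Case (b) should consist of the transfer paragraph alone.
\item The nilpotence of $N_1$ is never used.
\item You can drop the induction on $|G|$. Take a chief series through $G'$, and for each chief factor $H/K$ with $H\le G'$ apply $Z(\bar G)\cap\bar G'=1$ with $\bar G=G/K$. This shows every factor below $G'$ is eccentric and every factor above is central, so Hall's theorem gives $G'\cap N_1=1$ and $G=G'N_1$ together.
\end{enumerate}
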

    
    Since a subgroup of a solvable $A$-group is also a solvable $A$-group we can apply Lemma \ref{Strctre_of_sol_A_grps} to $G'$ and obtain that $G' = G'' \rtimes N_2$ where $N_2$ is the system normalizer of $G'$. So from Lemma \ref{Strctre_of_sol_A_grps} we have $G = (G'' \rtimes N_2) \rtimes N_1$. Continuing in this fashion we obtain the following corollary.
    
    \begin{cor}\label{cor_to_Strctre_of_sol_A_grps}
    	Let $G$ be a solvable $A$-group. Then $G = ((G^{m-1} \rtimes N_{m-1}) \rtimes \cdots )\rtimes N_1$ where $N_i$ is the system normalizer of $G^{i-1}$ and $m$ is the derived length of $G$.
    \end{cor}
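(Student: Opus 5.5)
The plan is an induction on the derived length $m$ of $G$. Throughout, $G^{i}$ denotes the $i$-th term of the derived series, with $G^{0}=G$, $G^{1}=G'$, and $G^{m}=1$. The essential input is Lemma \ref{Strctre_of_sol_A_grps}, applied at each step to a term of the derived series. This is legitimate because every subgroup of a solvable $A$-group is again a solvable $A$-group: it is solvable, and each of its Sylow subgroups lies in a Sylow subgroup of $G$, hence is abelian.

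First, for each $i$ with $0\le i\le m-1$, I apply Lemma \ref{Strctre_of_sol_A_grps} to the solvable $A$-group $G^{i}$. This gives
\[
G^{i} = (G^{i})' \rtimes N_{i+1} = G^{i+1}\rtimes N_{i+1},
\]
where $N_{i+1}$ is a system normalizer of $G^{i}$.

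Next I assemble these decompositions by induction on $k$, with the claim
\[
G = ((G^{k}\rtimes N_k)\rtimes\cdots)\rtimes N_1 .
\]
The case $k=1$ is the lemma itself. For the inductive step, substitute $G^{k} = G^{k+1}\rtimes N_{k+1}$ into the innermost factor. The resulting expression is still an iterated semidirect product, for three reasons:
\begin{itemize}
\item $G^{k+1}$ is characteristic in $G^{k}$, and $G^k$ is normal in $G$.
\item Each $N_j$ normalizes $G^{j}$, since $G^{j}$ is normal in $G^{j-1}$.
\item The required intersections are trivial, as each splitting $G^{j-1}=G^{j}\rtimes N_j$ shows.
\end{itemize}

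Finally I stop at $k=m-1$. Here $G^{m-1}$ is abelian, since its derived subgroup is $G^{m}=1$, so it remains as the innermost factor. This yields exactly the stated form
\[
G = ((G^{m-1}\rtimes N_{m-1})\rtimes\cdots)\rtimes N_1 .
\]

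The only delicate point is to make sure that the bracketing is interpreted consistently as ``normal subgroup first''. The term $G^{m-1}$ should not be split further: splitting it would only yield the trivial factor, and the corollary lists it explicitly. No genuine obstacle is expected, since everything reduces to repeated use of Lemma \ref{Strctre_of_sol_A_grps} together with the heredity of the solvable $A$-group property to subgroups.
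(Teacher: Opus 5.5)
Your proposal is correct and follows the paper's argument. You apply Lemma \ref{Strctre_of_sol_A_grps} successively to $G, G', G'', \dots$, which is legitimate because subgroups of solvable $A$-groups are again solvable $A$-groups, and you nest the resulting splittings $G^{i-1}=G^{i}\rtimes N_i$; your induction on $k$ simply formalizes the paper's ``continuing in this fashion''.
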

    
    Since  a system normalizer of a solvable group is nilpotent therefore  a system normalizer of a solvable $A$-group is abelian. Thus by the Corollary \ref{cor_to_Strctre_of_sol_A_grps} a solvable $A$-group is largely a chain of semi-direct products of abelian groups.
    
    \medskip
    Now let $G$ be a solvable $A$-group and let $F$ be the Fitting subgroup of $G$. Since $F$ is nilpotent, $F = P_1 \times \cdots \times P_s$ where $P_i$ is the Sylow $p_i$-subgroup of $F$. Therefore 
    \begin{equation*}\label{eq_i}
    	G/F = N_G(F)/C_G(F) \leq {\rm Aut}(F) \cong \prod_{i=1}^{s}{\rm Aut}(P_i).\tag{$*$}
    \end{equation*}
    
    Let $K_i$ be the $i^{th}$ projection of $G/F$ into ${\rm Aut}(P_i)$. Then $G/F$ is embedded into $K_1 \times \cdots \times K_s$. Therefore by the properties of derived length we have 
    
    \begin{equation*}\label{eq_ii}
       d(G) \leq max_{1 \leq i \leq s}\{d(K_i)\} + 1.	\tag{$**$}
    \end{equation*}
    
    \medskip
    Therefore in order to obtain the derived length of a solvable quartic-free $A$-group we now examine the structure of subgroups of ${\rm Aut}(P)$ where $P$ is an abelian quartic-free $p$-group.
    
    \begin{prop}\label{subgrps_of_GL}
    	Let r be a prime. Let $H$ be a solvable irreducible $A$-subgroup of ${\rm GL}(r,q)$. Then one of the following holds.
    	\begin{enumerate}[$(\rm i)$]
    		\item $H$ is imprimitive and $H$ conjugates to a subgroups of $M(r,q)$.
    		\item $H$ is primitive and $H$ conjugates to a subgroup of $N(r,q)$.
    	\end{enumerate}
    \end{prop}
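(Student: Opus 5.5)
The plan is to split according to whether $H$ is imprimitive or primitive. In the first case I use that $r$ is prime to force the block system into one-dimensional pieces. In the second I use the classical structure theory of solvable primitive linear groups, together with the $A$-group hypothesis, to force $H$ into the Singer normaliser.

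\emph{Imprimitive case.} Let $V=\mathbb{F}_q^r=V_1\oplus\cdots\oplus V_k$ with $k\ge 2$ be a system of imprimitivity permuted by $H$. Since $H$ is irreducible, it permutes the $V_i$ transitively, so $\dim V_i=r/k$. Because $r$ is prime and $k\ge 2$, we get $k=r$ and every $V_i$ is a line $\langle v_i\rangle$. In the basis $v_1,\dots,v_r$ each element of $H$ sends each $v_i$ to a scalar multiple of some $v_j$. Hence $H$ is conjugate in ${\rm GL}(r,q)$ to a subgroup of $M(r,q)=D(r,q)\rtimes S_r$, which is (i).

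\emph{Primitive case.} Now let $H$ be primitive and solvable. First, by Clifford's theorem every normal subgroup of $H$ acts homogeneously on $V$. In particular every abelian normal subgroup $A$ of $H$ generates, through its $\mathbb{F}_q$-span, a field $E=\mathbb{F}_q[A]\cong\mathbb{F}_{q^e}$, and $e\mid r$. Let $F(H)$ be the Fitting subgroup. It is nilpotent, and since $H$ is an $A$-group each Sylow subgroup of $F(H)$ is abelian, so $F(H)$ is abelian.

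The key step is to use the standard fact that in a solvable group $C_H(F(H))\le F(H)$.
\begin{enumerate}[$(\rm a)$]
\item Since $F(H)$ is abelian and normal, $E=\mathbb{F}_q[F(H)]$ is a field of degree $e\in\{1,r\}$.
\item If $e=r$, then $E^*$ is a Singer cycle $S(r,q)$. $H$ normalises $F(H)$, hence normalises $E$ and $E^*$, so $H\le N_{{\rm GL}(r,q)}(E^*)=N(r,q)$ after conjugation, which is (ii).
\item If $e=1$, then $F(H)\le Z({\rm GL}(r,q))$ consists of scalars. Then $F(H)\le Z(H)$, so $H=C_H(F(H))\le F(H)$. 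Thus $H$ is abelian, and by Schur's lemma $\mathbb{F}_q[H]$ is a field of degree $r$ acting irreducibly. So $H$ lies in a Singer cycle, which is again contained in $N(r,q)$.
\end{enumerate}

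The main obstacle I expect is the case $e=1$. This is where the $A$-group hypothesis is essential: without it $F(H)$ could contain a nonabelian extraspecial $r$-group of order $r^{3}$, giving primitive groups not contained in $N(r,q)$. The hypothesis rules this out by making $F(H)$ abelian. I would also check carefully that normalising $F(H)$ implies normalising the multiplicative group $E^*$ of the field it spans. This holds because conjugation by $h\in H$ is an algebra automorphism of ${\rm End}(V)$ that preserves $F(H)$, hence preserves its $\mathbb{F}_q$-span $E$.
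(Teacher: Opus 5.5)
Your proof is correct and follows essentially the paper's route. In the primitive case you apply Clifford's theorem to the (abelian) Fitting subgroup, use that $r$ is prime to force the dichotomy $e\in\{1,r\}$, kill the scalar case with $C_H(F(H))\le F(H)$, and place $H$ in the normaliser of the Singer cycle $\mathbb{F}_q[F(H)]^*$; you merely prove directly the facts the paper cites from Short and Blackburn--Neumann--Venkataraman, including the block-dimension argument for the imprimitive case.

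One small remark concerns your case (c). There $H=F(H)$ is scalar, which cannot be irreducible when $r\ge 2$, so the case is vacuous rather than genuinely landing in a Singer cycle. Indeed, your Schur's lemma claim that $\mathbb{F}_q[H]$ has degree $r$ contradicts $\mathbb{F}_q[H]=E$ having degree $1$. For the same reason, the $A$-group hypothesis is really used in (a), to make $\mathbb{F}_q[F(H)]$ a field, not in (c).
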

    
    \begin{proof}
    	If $H$ is an imprimitive subgroup of ${\rm GL}(r,q)$ then by \cite[Proposition 6.15, page 58]{BNV2007}, $H$ conjugates to a subgroup of $M(r,q)$.
    	
    	\medskip
    	So assume that $H$ is a primitive subgroup of ${\rm GL}(r,q)$. If $H$ is abelian then by \cite[Theorem 2.3.3, page 15]{S1992}, $H$ conjugates to a subgroup of $S(r,q) \leq N(r,q)$. So let $H$ be a non abelian subgroup of ${\rm GL}(r,q)$ and let $F$ the Fitting subgroup of $H$. Let $V$ be an irreducible $\mathbb{F}_qH$-module. Then by Clifford's Theorem $V$ is a semi-simple ${\mathbb{F}_qF}$-module of dimension $r$. Since $r$ is a prime either $V$ is a direct sum of isomorphic $1$-dimensional ${\mathbb{F}_qF}$-submodules or $V$ is an irreducible ${\mathbb{F}_qF}$-module. If $V$ is a direct sum of $1$-dimensional isomorphic ${\mathbb{F}_qF}$-submodules, then $F$ is a subgroup of ${\rm GL}(r,q)$ of scalar matrices but this contradicts that $C_H(F) = F$. Therefore $V$ must be an irreducible ${\mathbb{F}_qF}$-module. Thus by \cite[Theorem 2.3.3, page 15]{S1992}, $F$ is conjugate to a subgroup of $S(r,q)$. Since $F \trianglelefteq H$ by \cite[Theorem 2.3.5, page 15]{S1992}, $H$ is conjugate to a subgroup of $N(r,q)$.  
    \end{proof}
   
    Next we study the structure of quartic-free $A$-subgroups of ${\rm GL}(r,q)$ for $r \in \{2,3\}$. 
    
    \begin{lemm}{\label{cor_to_subgrps_of_GL}}
    	Let $H$ be a solvable quartic-free $A$-subgroup of ${\rm GL}(2,q)$. Then one of the following holds.
    	\begin{enumerate}[$(\rm i)$]
    		\item $H$ conjugates to a subgroup of $B(2,q)$. Further $H \cong P \rtimes (\mathbb{Z}_l \times \mathbb{Z}_s)$ where $P$ is the Sylow $p$-subgroup of $H$ and  $l \mid q-1$ and $s \mid q-1$.
    		\item $H$ conjugates to a subgroup of $D(2,q)$ and $H \cong \mathbb{Z}_l \times \mathbb{Z}_s$ where $l \mid q-1$ and $s \mid q-1$.
    		\item $H$ conjugates to a subgroup of $M(2,q)$ and $H = K \rtimes P$ where $K$ is Hall $2'$-subgroup of $H$ contained in $D(2,q)$ and $P$ is a Sylow $2$-subgroup of $H$.
    		\item $H$ conjugates to a subgroup of $N(2,q)$. $H = K \rtimes P$ where $K$ is Hall $2'$-subgroup of $H$ contained in $S(2,q)$ and $P$ is a Sylow $2$-subgroup of $H$.
    	\end{enumerate}
    \end{lemm}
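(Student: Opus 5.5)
We split according to how $H$ acts on $V=\mathbb{F}_q^2$: reducible, irreducible imprimitive, or irreducible primitive. The last two cases are handed to Proposition \ref{subgrps_of_GL} with $r=2$. Throughout, two facts are used. Subgroups and quotients of $A$-groups are $A$-groups. A Sylow $t$-subgroup of $H$ is abelian, and for $t\neq 2$ it has order at most $t^3$.

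\textbf{Reducible case.} Suppose $H$ fixes a line in $V$. Conjugating, we may assume $H\le B(2,q)=U\rtimes D(2,q)$, where $U\cong(\mathbb{F}_q,+)$ is the unipotent radical and a Sylow $p$-subgroup of $B(2,q)$. Then $P=H\cap U$ is the normal Sylow $p$-subgroup of $H$. By Schur--Zassenhaus, $H=P\rtimes K$ with $K$ a $p'$-group. The group $K$ embeds in $H/P\le B(2,q)/U\cong D(2,q)\cong\mathbb{Z}_{q-1}\times\mathbb{Z}_{q-1}$. Hence $K$ is abelian of rank at most $2$, so $K\cong\mathbb{Z}_l\times\mathbb{Z}_s$ with $l,s\mid q-1$. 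This gives (i).

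If in addition $p\nmid|H|$, then $V$ is completely reducible by Maschke's theorem. So $H$ fixes a complementary line as well and is conjugate into $D(2,q)$, which gives (ii).

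\textbf{Irreducible imprimitive case.} By Proposition \ref{subgrps_of_GL}(i), $H\le M(2,q)=D(2,q)\rtimes S_2$. Let $H_0=H\cap D(2,q)$, the stabiliser of each line of the system of imprimitivity. Then $|H:H_0|=2$ and $H_0$ is abelian. Its Hall $2'$-subgroup $K$ is characteristic in $H_0$, hence normal in $H$. Since $|H:H_0|=2$, $K$ is also a Hall $2'$-subgroup of $H$, and $K\le D(2,q)$. By Schur--Zassenhaus, $H=K\rtimes P$ with $P$ a Sylow $2$-subgroup of $H$, which gives (iii).

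\textbf{Irreducible primitive case.} By Proposition \ref{subgrps_of_GL}(ii), $H\le N(2,q)=S(2,q)\rtimes\mathbb{Z}_2$. The same argument with $H_0=H\cap S(2,q)$ in place of $H\cap D(2,q)$ gives $H=K\rtimes P$ with $K\le S(2,q)$, which is (iv).

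The only point needing care is the reducible case. There we must check that the complement $K$ really is a direct product of two cyclic groups of orders dividing $q-1$, rather than just some abelian $p'$-group. This holds because $K\cap U=1$, so $K$ maps isomorphically into the quotient $B(2,q)/U\cong D(2,q)$. In the other cases the argument is straightforward. The quartic-free hypothesis is not really needed beyond the $A$-group property and solvability, which are already built into Proposition \ref{subgrps_of_GL}.
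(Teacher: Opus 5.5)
Your proof is correct and follows the same route as the paper. Reducible groups go into $B(2,q)$, and into $D(2,q)$ by Maschke when $p\nmid|H|$; irreducible groups go into $M(2,q)$ or $N(2,q)$ via Proposition~\ref{subgrps_of_GL}, with $K$ the Hall $2'$-part of $H\cap D(2,q)$ or $H\cap S(2,q)$. You also supply the Schur--Zassenhaus and normality details the paper leaves as ``clear''. One small point: in the primitive case $|H:H\cap S(2,q)|$ may be $1$ rather than $2$, but your argument only needs the index to be at most $2$, so nothing changes.
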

    
    \begin{proof}
    	If $H$ is reducible but not completely reducible then $H$ conjugates to a subgroup of $B(2,q)\cong \mathbb{Z}_q \rtimes (\mathbb{Z}_{q-1} \times \mathbb{Z}_{q-1})$. In particular $H \cong P \rtimes (\mathbb{Z}_l \times \mathbb{Z}_s)$ where $P$ is the Sylow $p$-subgroup of $H$ and  $l \mid q-1$ and $s \mid q-1$.
    	
    	\medskip 
    	If $H$ is reducible and $p\nmid |H|$, then by Mashke's Theorem $H$ is completely reducible. Thus the underlying $\mathbb{F}_qH$-module is a direct sum of two one dimensional $\mathbb{F}_qH$-submodule. Therefore $H$ conjugate to a subgroup of $D(2,q) \cong \mathbb{Z}_{q-1} \times \mathbb{Z}_{q-1}$.
    	
    	\medskip
    	If $H$ is primitive then by Proposition \ref{subgrps_of_GL}, $H$ conjugates to a subgroup of $M(2,q)$. Let $H' = H \cap D(2,q)$ and $K$ be a Sylow $2'$-subgroup of $H'$. Then clearly $H = K \rtimes P$.
    	
    	\medskip
    	If $H$ is primitive then by Proposition \ref{subgrps_of_GL}, $H$ conjugates to a subgroup of $N(2,q)$. Let $H' = H \cap S(2,q)$ and $K$ be a Sylow $2'$-subgroup of $H'$. Clearly $H = K \rtimes P$.
    \end{proof}
    
    \begin{rem}
    	Note that an odd order $p'$ quartic-free $A$-subgroup of ${\rm GL}(2,q)$ is abelian.
    \end{rem}
    
    \begin{lemm}
    	Let $H$ be a $p'$ quartic-free $A$-subgroup of ${\rm GL}(3,q)$. Then one of the following holds.
    	\begin{enumerate}[$(\rm i)$]
    		\item $H$ is reducible and $H$ is conjugate to a subgroup of $D(3,q)$. In particular $H \cong \mathbb{Z}_l \times \mathbb{Z}_m \times \mathbb{Z}_s$ where $l \mid q-1$, $m \mid q-1$ and $s \mid q-1$. 
    		\item $H$ is reducible and $H$ is conjugate to a subgroup of ${\mathbb{F}_q}^* \times M(2,q)$. In particular $H = H_{2'} \rtimes H_2$ with $H_{2'}$ abelian. 
    		\item $H$ is reducible and $H$ is conjugate to a subgroup of ${\mathbb{F}_q}^* \times N(2,q)$. In particular $H = H_{2'} \rtimes H_2$ with $H_{2'}$ abelian.
    		\item $H$ is irreducible and $H$ is conjugate to a subgroup of $M(3,q)$ with $3 \mid |H|$. In particular if $|H|$ is odd then $H = H_{3'} \rtimes H_3$. If $|H|$ is even then  $H = H_{\{2,3\}'} \rtimes H_{\{2,3\}}$ where $ H_{\{2,3\}'} \leq D(3,q)$.
    		\item $H$ is reducible and $H$ is conjugate to a subgroup of $N(3,q)$. In particular either $H$ is cyclic or $H = H_{3'} \rtimes H_3$ with $H_{3'}$ cyclic.
    	\end{enumerate}
    \end{lemm}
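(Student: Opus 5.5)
Since $p \nmid |H|$, Maschke's Theorem makes the natural $\mathbb{F}_qH$-module $V=\mathbb{F}_q^3$ completely reducible. I would split into two cases: $V$ reducible and $V$ irreducible.

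\textbf{Reducible case.} Here $V$ decomposes either as three $1$-dimensional submodules or as $U\oplus W$ with $\dim U=1$ and $W$ an irreducible $2$-dimensional $\mathbb{F}_qH$-module.
\begin{enumerate}[(a)]
\item In the first situation $H$ is conjugate to a subgroup of $D(3,q)\cong \mathbb{Z}_{q-1}^3$, so $H\cong\mathbb{Z}_l\times\mathbb{Z}_m\times\mathbb{Z}_s$ with $l,m,s \mid q-1$. This is (i).
\item In the second situation $H$ embeds in $\mathbb{F}_q^*\times {\rm GL}(2,q)$. The projection $\bar H$ of $H$ into ${\rm GL}(2,q)$ is an irreducible $p'$ $A$-group; it is quartic-free as a quotient of $H$, and solvable since it is an $A$-group with $p\nmid|\bar H|$ inside ${\rm GL}(2,q)$. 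By Proposition \ref{subgrps_of_GL} and Lemma \ref{cor_to_subgrps_of_GL}, $\bar H$ is conjugate into $M(2,q)$ or $N(2,q)$ and has the form $K\rtimes P$, where $K$ is an abelian Hall $2'$-subgroup lying in $D(2,q)$ or $S(2,q)$.
\item Pulling back, $H\le \mathbb{F}_q^*\times M(2,q)$ or $H\le \mathbb{F}_q^*\times N(2,q)$. The set of $2'$-elements of $H$ lies in $\mathbb{F}_q^*\times D(2,q)$ (resp.\ $\mathbb{F}_q^*\times S(2,q)$), because the top quotient $M(2,q)/D(2,q)\cong S_2$ and $N(2,q)/S(2,q)\cong\mathbb{Z}_2$ are $2$-groups. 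So $H_{2'}=H\cap(\mathbb{F}_q^*\times D(2,q))_{2'}$ is a normal abelian Hall subgroup, and Schur--Zassenhaus gives $H=H_{2'}\rtimes H_2$. This is (ii) and (iii).
\end{enumerate}
I read (v) as the irreducible primitive case, treated below together with (iv).

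\textbf{Irreducible case.} First I need $H$ solvable in order to use Proposition \ref{subgrps_of_GL}. I would argue this from the fact that a nonsolvable $A$-group involves a simple group ${\rm PSL}(2,r)$ or $J_1$, whose order forces a sizable $p$-part or a $p^4$; failing that, I would simply assume solvability, as in the heading of the section. Proposition \ref{subgrps_of_GL} with $r=3$ then leaves two cases.
\begin{enumerate}[(a)]
\item \emph{Imprimitive.} $H\le M(3,q)=D(3,q)\rtimes S_3$. Irreducibility forces the image of $H$ in $S_3$ to be transitive, so it is $A_3$ or $S_3$ and $3\mid |H|$.
\begin{itemize}
\item If $|H|$ is odd, the image is $\mathbb{Z}_3$. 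Then $N=H\cap D(3,q)$ is abelian and normal with $H/N\cong\mathbb{Z}_3$. Set $H_{3'}=N_{3'}$; it is characteristic in $N$, hence normal in $H$, and it is a normal Hall subgroup because the $3$-part of $H$ is $N_3$ extended by $\mathbb{Z}_3$. So $H=H_{3'}\rtimes H_3$.
\item If $|H|$ is even, I would do the same with $\pi=\{2,3\}$: $N_{\pi'}$ is a normal abelian Hall $\pi'$-subgroup contained in $D(3,q)$, and Schur--Zassenhaus gives a complement $H_{\{2,3\}}$.
\end{itemize}
This gives (iv).
\item \emph{Primitive.} $H\le N(3,q)=S(3,q)\rtimes\mathbb{Z}_3$. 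If $H\le S(3,q)$, then $H$ is cyclic. Otherwise $H\cap S(3,q)$ is cyclic of index $3$, its $3'$-part $H_{3'}$ is a cyclic normal Hall subgroup, and Schur--Zassenhaus gives $H=H_{3'}\rtimes H_3$. This is (v).
\end{enumerate}

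\textbf{Main obstacle.} I expect the hardest step to be justifying solvability in the irreducible case, and the application of Lemma \ref{cor_to_subgrps_of_GL} to the projection $\bar H$ in the reducible case, since Proposition \ref{subgrps_of_GL} assumes solvability. Beyond that, the only care needed is checking that the relevant $\pi'$-part of the abelian base group is a genuine Hall subgroup of $H$, i.e.\ that the top quotient is a $\pi$-group. In all cases this holds because that quotient embeds in $S_2$, $\mathbb{Z}_2$, $S_3$ or $\mathbb{Z}_3$.
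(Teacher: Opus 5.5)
Your proof follows the paper's own route. Maschke splits $V$ as $1+1+1$ or $1+2$. The $1+2$ case reduces to the ${\rm GL}(2,q)$ lemma, with an abelian normal Hall $2'$-subgroup inside ${\mathbb{F}_q}^*\times D(2,q)$ or ${\mathbb{F}_q}^*\times S(2,q)$. The irreducible case is split by Proposition~\ref{subgrps_of_GL} with $r=3$ into $M(3,q)$ and $N(3,q)$, and you read (v) as the primitive case, exactly as the paper's proof does. Your imprimitive argument, via $H\cap D(3,q)$ with quotient embedded in $S_3$, is if anything cleaner than the paper's, whose inclusion $H\le D(3,q)\rtimes\langle\pi\rangle$ is only valid when $|H|$ is odd.

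The one thing to drop is your first idea for obtaining solvability in the irreducible case. It cannot be proved, because the statement is false without it.
\begin{itemize}
\item For $q=11$ we have $A_5\le {\rm PSL}(2,11)\cong\Omega(3,11)\le {\rm GL}(3,11)$; the $3$-dimensional representation of $A_5$ is realized over $\mathbb{F}_{11}$ since $5=4^2$ there.
\item This $A_5$ is an $11'$-group of quartic-free order $60$ with abelian Sylow subgroups.
\item It acts irreducibly, since it has no nontrivial representations of degree $1$ or $2$ in coprime characteristic.
\item It acts primitively, since it has no transitive action on three points.
\item It lies in none of $D(3,q)$, ${\mathbb{F}_q}^*\times M(2,q)$, ${\mathbb{F}_q}^*\times N(2,q)$, $M(3,q)$, $N(3,q)$, all of which are solvable.
\end{itemize}
Its simple section ${\rm PSL}(2,5)$ has order coprime to $p$, so no obstruction from a large $p$-part or from $p^4$ arises.

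Solvability must therefore be a hypothesis. The paper assumes it tacitly: Section 2 treats solvable groups, and Proposition~\ref{subgrps_of_GL} requires it. Your fallback of assuming it is the correct choice. Under that hypothesis, your unproved claim that $\bar H$ is solvable is automatic, since $\bar H$ is a quotient of $H$. The claim is true even without the hypothesis, but then it needs an argument via Dickson's theorem: a nonsolvable $p'$-subgroup of ${\rm GL}(2,q)$ contains ${\rm SL}(2,5)$, whose Sylow $2$-subgroup $Q_8$ is non-abelian.
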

    \begin{proof}
    Let $V$ be the underlying $\mathbb{F}_qH$-module. If $V$ is reducible and direct sum of three $1$-dimensional $\mathbb{F}_H$-submodules of $V$. Then $H$ conjugates into a subgroup of $D(3,q)$. 
    
    \medskip
    If $V$ is reducible and direct sum of $V_1$ and $V_2$ where $V_1$ is a $1$-dimensional $\mathbb{F}_qH$-submodule and $V_2$ is a $2$-dimensional irreducible $\mathbb{F}_qH$-submodule. Then by Lemma \ref{cor_to_subgrps_of_GL}, either $H$ conjugates to a subgroup of  ${\mathbb{F}_q}^* \times M(2,q)$ or to a subgroup of ${\mathbb{F}_q}^* \times N(2,q)$. Now first assume that $H \leq {\mathbb{F}_q}^* \times M(2,q)$. Let $L = {\mathbb{F}_q}^* \times D(2,q)$. Then $H \cap L$ is normal in $H$. Thus $H_{2'} \leq H \cap L$ is normal in $H$. Hence $H = H_{2'} \rtimes H_2$. Similarly we can show that $H = H_{2'} \rtimes H_2$ when $H$ is conjugate to a subgroup of ${\mathbb{F}_q}^* \times N(2,q)$.
    
    \medskip
    Now assume that $V$ is irreducible. If $V$ is imprimitive then by Proposition \ref{subgrps_of_GL}, $H$ is conjugate to a subgroup of $M(3,q)$. Further since $H$ is irreducible it permutes three $1$-dimensional  subspaces of $V$ transitively, therefore  $3 \mid |H|$. Now assume that $H \leq M(3,q)$. Let $\pi \in M(3,q)$ be the permutation matrix corresponding to a $3$-cycle. Then $H \leq D(3,q) \rtimes \langle \pi \rangle$. Therefore $H = H_{3'} \rtimes H_3$. Now let $K = H\cap D(2,q)$. Then clearly $H = K_{\{2,3\}'} \rtimes H_{\{2,3\}} =  H_{\{2,3\}'} \rtimes H_{\{2,3\}}$.
    
    \medskip
     So assume that $V$ is irreducible and primitive. Then by Proposition \ref{subgrps_of_GL}, $H$ is conjugate to a subgroup of $N(3,q)$. If $H$ is abelian then $H$ is cyclic. So suppose that $H$ is non abelian.  Let $K = S(3,q)\cap H$. Then $[H:K] = 3$. Thus $H = K_{3'} \rtimes H_3 = H_{3'} \rtimes H_3$.
    \end{proof}
    \begin{lemm}
    	Let $P \cong {\mathbb Z}_{p^2} \times {\mathbb{Z}_{p}}$. Then ${\rm Aut}(P) \cong R \rtimes ({\mathbb{Z}_{p-1}\times \mathbb{Z}_{p-1}})$ where $R$ is the Sylow $p$-subgroup of ${\rm Aut}(P)$ order $p^3$. In particular a $p'$-subgroup of ${\rm Aut}(P)$ is isomorphic to $\mathbb{Z}_l \times \mathbb{Z}_s$ where $l \mid p-1$ and $s \mid p-1$.
    \end{lemm}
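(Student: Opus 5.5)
We compute ${\rm Aut}(P)$ directly and then read off its Sylow $p$-subgroup and a complement. Write $P = \langle x \rangle \times \langle y \rangle$ with $|x| = p^2$ and $|y| = p$.

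\emph{Step 1: describe the automorphisms.} An endomorphism $\alpha$ is determined by
\[
\alpha(x) = x^{a}y^{b}, \qquad \alpha(y) = x^{pc}y^{d},
\]
where $a \in \mathbb{Z}_{p^2}$ and $b,c,d \in \mathbb{Z}_p$. The exponent of $x$ in $\alpha(y)$ must be a multiple of $p$ because $y$ has order $p$. The endomorphism $\alpha$ is bijective exactly when it induces automorphisms on two quotients. The first is $\Omega_1(P)=\langle x^p, y\rangle$. The second is $P/\Omega_1(P) \cong \mathbb{Z}_p$ (with $\Omega_1$ characteristic). In coordinates this says $p\nmid a$ and $p \nmid d$. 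Counting the choices gives
\[
|{\rm Aut}(P)| = (p^2-p)\cdot p \cdot p\cdot (p-1) = p^3(p-1)^2 .
\]
Hence a Sylow $p$-subgroup has order $p^3$.

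\emph{Step 2: find a normal Sylow $p$-subgroup.} Consider the homomorphism
\[
\phi:{\rm Aut}(P)\to \mathbb{F}_p^*\times\mathbb{F}_p^*, \qquad \alpha\mapsto (a \bmod p,\ d)
\]
It records the action of $\alpha$ on the characteristic sections $P/\Omega_1(P)$ and $\Omega_1(P)/\mho_1(P) \cdot$ (i.e.\ on $\langle y\rangle$ modulo $\langle x^p\rangle$). So $\phi$ is a homomorphism, and it is clearly surjective. The kernel $R$ has order $p^3(p-1)^2/(p-1)^2 = p^3$. It is therefore the unique, hence normal, Sylow $p$-subgroup.

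\emph{Step 3: construct a complement.} The subgroup $T=\{x\mapsto x^{a},\ y\mapsto y^{d}\}$ with $a$ ranging over the elements of order dividing $p-1$ in $\mathbb{Z}_{p^2}^*$ is isomorphic to $\mathbb{Z}_{p-1}\times\mathbb{Z}_{p-1}$. This uses $\mathbb{Z}_{p^2}^*\cong \mathbb{Z}_{p}\times\mathbb{Z}_{p-1}$. The subgroup $T$ meets $R$ trivially and has order $(p-1)^2$. Schur--Zassenhaus could also be cited here. This yields ${\rm Aut}(P)= R\rtimes T$.

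\emph{Step 4: the $p'$-subgroups.} A $p'$-subgroup $H$ satisfies $H\cap R=1$. So $\phi|_H$ is injective, and $H$ embeds in $\mathbb{Z}_{p-1}\times\mathbb{Z}_{p-1}$. It is therefore abelian of rank at most $2$, with each cyclic factor of order dividing $p-1$. Concretely $H\cong\mathbb{Z}_l\times\mathbb{Z}_s$ with $l,s\mid p-1$. By Schur--Zassenhaus $H$ is even conjugate into $T$.

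The main obstacle is Step 1. The delicate points are the bijectivity criterion and verifying that $(a \bmod p, d)$ defines a homomorphism; the cleanest way is to phrase it via the action on the two characteristic sections rather than by matrix multiplication. After that, Steps 2--4 are routine counting.
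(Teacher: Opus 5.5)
Your proof is correct. It reaches the two key facts by a different mechanism than the paper.

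\textbf{The paper's route.} The paper states $|{\rm Aut}(P)|=p^3(p-1)^2$ and asserts that the Sylow $p$-subgroup $R$ is normal ``by Sylow's theorem''. It then identifies ${\rm Aut}(P)/R$ by letting it act on $\Omega_1(P)\cong\mathbb{Z}_p\times\mathbb{Z}_p$ inside ${\rm GL}(2,p)$, using the invariant line $\mho_1(P)=P^p$ and Maschke's theorem to conjugate the image into $D(2,p)$.

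\textbf{Your route.} You keep only the diagonal part of that action, via the two characteristic sections $P/\Omega_1(P)$ and $\Omega_1(P)/\mho_1(P)$ of order $p$. This gives an explicit surjection $\phi$ onto $\mathbb{F}_p^*\times\mathbb{F}_p^*$ whose kernel has order $p^3$.

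\textbf{What each approach buys.} Your version supplies rigor exactly where the paper is thin, at two points.
\begin{enumerate}[(1)]
\item Sylow counting alone does not force $n_p=1$: we only know $n_p\mid(p-1)^2$, and $(p-1)^2\equiv 1\pmod p$. For example, $n_3\in\{1,4\}$ when $p=3$.
\item $R$ does not centralize $\Omega_1(P)$, since elements with $c\neq 0$ send $y\mapsto x^{pc}y$. So the paper's ``homomorphism from $G/R$ to ${\rm GL}(2,p)$'' has to be realized through a complement of $R$. Such a complement acts faithfully on $\Omega_1(P)$ because the kernel of that action is a $p$-group.
\end{enumerate}
The paper's route is shorter and module-theoretic. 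Yours is coordinate-based but self-contained: it exhibits the complement $T$ explicitly and proves the ``in particular'' clause outright, through the injectivity of $\phi|_H$ on $p'$-subgroups.

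One minor wording slip: in Step 1, $\Omega_1(P)$ is a subgroup, not a quotient.
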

    
    \begin{proof}
    Let $G = {\rm Aut}(P)$. A standard argument shows that $|G| = p^3(p-1)^2$. Thus by Sylow's Theorem it can be seen that $R$ is normal in $G$. So $G$ is a semi-direct product of $R$ by $G/R$. Now since $P_p = \{x \in P \mid x^p = 1\} \cong {\mathbb{Z}_p \times \mathbb{Z}_p}$ is characteristic in $P$, there is a homomorphism from $G/R$ to ${\rm GL}(2,p)$. Futher since the subgroup $P^p = \{x^p \mid x \in P\} \cong \mathbb{Z}_p$ is also characteristic in $P$ by Maschke's Theorem the image of $G/R$ conjugates into a subgroup of $D(2,p)$. Therefore $G/R \cong \mathbb{Z}_{p-1}\times \mathbb{Z}_{p-1}$.  	    
    \end{proof}
    
    Note that it is clear that an odd order quartic-free $A$-subgroup of ${\rm Aut}(P)$ where $P$ is an abelian group of order $p^{\alpha}$ with $\alpha \in \{1,2,3\}$ is meta abelian. Thus from Lemma \ref{Strctre_of_sol_A_grps} and equation (\ref{eq_ii}), we have the following structure of an odd order quartic-free $A$-group.
    
    \begin{cor}
    	Let $G$ be a quartic-free $A$-group of odd order. Then $G = (A \rtimes B) \rtimes C$ where $A,B$ and $C$ are abelian subgroups of $G$ of suitable orders.
    \end{cor}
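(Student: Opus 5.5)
The approach is to show first that $d(G)\le 3$, and then to peel off system normalizers using Corollary \ref{cor_to_Strctre_of_sol_A_grps}.

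\emph{Solvability and the reduction.} By Feit--Thompson, $G$ is solvable. Let $F=P_1\times\cdots\times P_s$ be its Fitting subgroup. Since $G$ is an $A$-group, each $P_i$ is abelian. Since $G$ is quartic-free, $|P_i|=p_i^{\alpha_i}$ with $\alpha_i\le 3$. Hence $P_i$ is one of $\mathbb{Z}_{p_i}$, $\mathbb{Z}_{p_i^2}$, $\mathbb{Z}_{p_i}^2$, $\mathbb{Z}_{p_i^3}$, $\mathbb{Z}_{p_i^2}\times\mathbb{Z}_{p_i}$ or $\mathbb{Z}_{p_i}^3$. By $(*)$, $G/F$ embeds in $K_1\times\cdots\times K_s$, where $K_i\le {\rm Aut}(P_i)$ is the projection. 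Each $K_i$ is a quotient of $G/F$, so it has odd order, is quartic-free, and is an $A$-group. By $(**)$ it suffices to show $d(K_i)\le 2$ for every $i$.

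\emph{Case analysis of $K_i$.} The key extra input is that $O_{p_i}(G/F)=1$, so each $K_i$ can be taken to be a $p_i'$-group. If $P_i$ is cyclic, ${\rm Aut}(P_i)$ is abelian, so $d(K_i)\le 1$. If $P_i\cong \mathbb{Z}_{p^2}\times\mathbb{Z}_p$, the last lemma shows that $p'$-subgroups of ${\rm Aut}(P_i)$ are abelian. If $P_i\cong\mathbb{Z}_p^2$, then $K_i$ is an odd order $p'$ subgroup of ${\rm GL}(2,p)$, which is abelian by the Remark after Lemma \ref{cor_to_subgrps_of_GL}. If $P_i\cong \mathbb{Z}_p^3$, I would use the lemma on $p'$ subgroups of ${\rm GL}(3,q)$. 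Since $|K_i|$ is odd, the $M(2,q)$ and $N(2,q)$ cases reduce to the abelian group $H_{2'}$. The $M(3,q)$ case gives $H_{3'}\rtimes H_3$ with $H_{3'}\le D(3,q)$ abelian and $H_3$ abelian, because $H$ is an $A$-group. The $N(3,q)$ case gives $H_{3'}\rtimes H_3$ with $H_{3'}$ cyclic. In every case $d(K_i)\le 2$.

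\emph{Conclusion.} This gives $d(G)\le 3$, i.e.\ $G'''=1$. Now Corollary \ref{cor_to_Strctre_of_sol_A_grps} with $m\le 3$ gives $G=(G''\rtimes N_2)\rtimes N_1$, where $A=G''$ is abelian because $G'''=1$. Here $N_2$ and $N_1$ are system normalizers of $G'$ and $G$, so they are nilpotent $A$-groups and therefore abelian. All three groups are subgroups of $G$, so they are quartic-free. If $m<3$, the corresponding factors are trivial.

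\emph{Main obstacle.} The step I expect to be delicate is justifying that $K_i$ may be assumed to be a $p_i'$-group. This is needed because $p$-parts in ${\rm Aut}(P_i)$, such as the Borel case or the Sylow subgroup $R$, would not obviously give $d\le 2$. I would argue it as follows: the preimage in $G$ of $O_{p_i}(K_i)$ acts on $P_i$ and on the other $P_j$ appropriately. A cleaner alternative is to apply $(*)$ with $F$ replaced by the Fitting subgroup and to use $C_G(F)\le F$ together with the standard fact that $O_p(G/C_G(P))=1$ for a $p$-chief-series argument. Failing that, I would handle the $p$-parts directly by using the abelian-Sylow hypothesis to control the Borel case.
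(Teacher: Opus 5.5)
Your route is the paper's: bound $d(K_i)\le 2$ for each projection $K_i\le{\rm Aut}(P_i)$, get $d(G)\le 3$ from $(**)$, and read off $G=(G''\rtimes N_2)\rtimes N_1$ from Corollary \ref{cor_to_Strctre_of_sol_A_grps}. That final step and your case analysis are fine. The gap is the step you flag yourself. Every result you cite for $P_i\cong\mathbb{Z}_p^2$, $\mathbb{Z}_p^3$ or $\mathbb{Z}_{p^2}\times\mathbb{Z}_p$ is about $p'$-subgroups, so you need $p_i\nmid|K_i|$, and the reason you give is wrong. The claim $O_{p_i}(G/F)=1$ is false even for odd-order quartic-free $A$-groups. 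In $G=\mathbb{Z}_7\rtimes\mathbb{Z}_9$, with $\mathbb{Z}_9$ acting through its quotient of order $3$, one has $F=\mathbb{Z}_7\times\mathbb{Z}_3$ and $G/F\cong\mathbb{Z}_3$, although $3\mid|F|$. Even where $O_p$ is trivial, that does not make a group, let alone its quotient $K_i$, a $p'$-group; for instance $\mathbb{Z}_{13}\rtimes\mathbb{Z}_3\le N(3,3)\le{\rm GL}(3,3)$ has $O_3=1$. The fact $O_p(G/C_G(V))=1$ that you mention holds for $p$-chief factors $V$, not for $P_i$ in general. Your other alternatives are not carried out.

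The missing idea is a one-line use of the $A$-group hypothesis. Since $F$ acts trivially on $P_i$, $K_i\cong G/C_G(P_i)$. Now $P_i=O_{p_i}(G)$ lies in every Sylow $p_i$-subgroup $S$ of $G$. Each such $S$ is abelian, so $S\le C_G(P_i)$, and hence $p_i\nmid|K_i|$. With this, your case analysis goes through as written. The paper's ``it is clear'' passes over the same point, since its ${\rm GL}(3,q)$ lemma also treats only $p'$-subgroups.
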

    
    Before moving forward we provide the structure of a solvable cube-free group. Li and Qiao have already shown that a solvable cube-free group is largely a semi-direct product of cube-free abelian groups of suitable orders (see \cite{QL2011}). But our result provides description of these abelian groups. In particular we show that an odd order cube-free group $G$ is metaabelian with $G' \cong {\mathbb{Z}_a}\times \mathbb{Z}_b$ and $G/G' \cong {\mathbb{Z}_c} \times \mathbb{Z}_d$ where $a,b,c,d$ are suitable cube-free integers. Further in addition we prove that all the compliments of $G'$ in $G$ are conjugate.
    
    \begin{prop}
    	Let $G$ be a cube-free group of even order and let $H$ be a Hall $2'$-subgroup of $G$. Then 
    	\begin{enumerate}[$(\rm i)$]
    		\item $H = H' \rtimes N$ where $H'$ is abelian and $N$ is a system normaliser of $H$ containing a Sylow $3$-subgroup of $H$. Further all compliments of $H'$ in $H$ are conjugate.
    		\item  $G = H \rtimes P$ or $G = (P \times H') \rtimes N$ where $P$ is a Sylow $2$-subgroup of $G$.
    	\end{enumerate}
    \end{prop}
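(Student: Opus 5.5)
The plan assumes that $G$ is solvable; this is implicit, since otherwise the Hall $2'$-subgroup $H$ need not exist or be normal. A cube-free group has Sylow subgroups of order $p$ or $p^2$, so these are abelian and $G$, and with it every subgroup, is an $A$-group.

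\textbf{Part (i).} $H$ has odd order, so it is solvable (for instance as a subgroup of the solvable group $G$). By Lemma \ref{Strctre_of_sol_A_grps}, $H = H' \rtimes N$ with $N$ a system normaliser of $H$, and $N$ is abelian.

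To see that $H'$ is abelian, let $F = P_1 \times \cdots \times P_s$ be the Fitting subgroup of $H$. By (\ref{eq_i}), $H/F$ embeds in $K_1 \times \cdots \times K_s$ with $K_i \leq {\rm Aut}(P_i)$, and each $P_i$ is $\mathbb{Z}_p$, $\mathbb{Z}_{p^2}$ or $\mathbb{Z}_p \times \mathbb{Z}_p$.
\begin{enumerate}[(a)]
\item If $|P_i| = p$ or $P_i$ is cyclic, then ${\rm Aut}(P_i)$ is abelian.
\item If $|P_i| = p^2$, then $P_i$ is already a full Sylow $p$-subgroup of $H$, so $p \nmid |H/F|$. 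Hence $K_i$ is an odd-order $p'$-subgroup of ${\rm GL}(2,p)$, which is abelian by the Remark after Lemma \ref{cor_to_subgrps_of_GL}.
\end{enumerate}
Thus $H/F$ is abelian, so $H' \leq F$ is abelian.

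For the Sylow $3$-subgroup, let $P_3$ be the $3$-part of $F$. The group $H/C_H(P_3)$ is an odd-order subgroup of ${\rm Aut}(P_3)$, whose order divides $2$, $6$ or $48$. It is also a $3'$-group: when $|P_3| = 9$ the argument in (b) applies, and when $|P_3| = 3$ the automorphism group has order $2$. Hence it is trivial and $P_3 \leq Z(H)$.

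In an $A$-group $Z(H) \cap H' = 1$ (Taunt), so $3 \nmid |H'|$. Since $H'$ is a normal Hall subgroup of index $|N|$, this forces a Sylow $3$-subgroup of $H$ to lie in $N$, up to replacing $N$ by a conjugate.

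For conjugacy of complements, I would show that every complement $M$ of $H'$ in $H$ is a system normaliser. The idea is that $M \cong H/H'$ is abelian, and $H = H'M$ with $H'$ abelian and $H' \cap Z(H) = 1$. Hence $C_{H'}(M) = Z(H) \cap H' = 1$, and Taunt's characterisation of system normalisers in $A$-groups then identifies $M$ with a system normaliser. Hall's theorem that system normalisers are conjugate finishes the argument. This identification is the step I expect to be the main obstacle. As a fallback I would argue Sylow-by-Sylow, using that $\gcd(|H'|,|N|)$-parts split coprimely, and apply Schur--Zassenhaus on each primary component of $H'$.

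\textbf{Part (ii).} Let $P$ be a Sylow $2$-subgroup of $G$, of order $2$ or $4$. Apply Lemma \ref{Strctre_of_sol_A_grps} to $G$ to get $G = G' \rtimes N_1$, and split into two cases.
\begin{enumerate}[(a)]
\item If $2 \nmid |G'|$, then $G' \leq H$. So $H/G'$ is the Hall $2'$-subgroup of the abelian group $G/G'$, hence normal, and $G = H \rtimes P$.
\item If $2 \mid |G'|$, I would show that $P$ is normal and centralised by $H'$, so that $P \times H'$ is a normal subgroup with abelian quotient. This should follow because $H'$ acts on $O_2(G)$ through odd-order automorphisms of a group of order at most $4$, so only elements of order $3$ act. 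Those $3$-elements lie in $N$ by part (i), so they do not lie in $H'$. A complement $N$ to $P \times H'$ is then provided by a system normaliser, giving $G = (P \times H') \rtimes N$.
\end{enumerate}
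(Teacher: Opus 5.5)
Your argument that $H'$ is abelian and that $3\nmid|H'|$ is correct. It uses $O_3(H)\le Z(H)$, Taunt's $Z(H)\cap H'=1$ and $H'\le F$, which is more transparent than the paper's Burnside-complement remark. Drop the claim that $H'$ is a Hall subgroup. It is false for $H=(\mathbb{Z}_7\rtimes\mathbb{Z}_3)\times\mathbb{Z}_7$, where $|H'|=7$ and $|N|=21$, and you do not need it.

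\textbf{Gap 1: conjugacy of complements.} You never show that a complement $M$ is a system normaliser; the paper has the same hole (``not difficult to see''). Your fallback does not work either: Schur--Zassenhaus needs coprimality, and that fails exactly at primes dividing both $|H'|$ and $|N|$, as in the example above.

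The missing step is short and uses your own observation $C_{H'}(M)=1$. If $a\in H'$ normalises $M$, then $[a,m]\in H'\cap M=1$ for all $m\in M$. So $N_{H'}(M)=C_{H'}(M)\le Z(H)\cap H'=1$, and by Dedekind $N_H(M)=M\,N_{H'}(M)=M$. Thus every complement is an abelian self-normalising subgroup, i.e.\ a Carter subgroup of the solvable group $H$. Carter's conjugacy theorem then gives the claim, and a posteriori every complement is a system normaliser.

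\textbf{Gap 2: normality of $P$ in (ii)(b).} The paper only cites \cite[Lemma 3.8]{QL2011} here. You assert $P\trianglelefteq G$, but your justification concerns how $H'$ acts on $O_2(G)$, which presupposes $O_2(G)=P$. Here is one way to close it.
\begin{enumerate}[(1)]
\item If $2\mid|G'|$, then $N_G(P)\neq C_G(P)$; otherwise Burnside's normal $p$-complement theorem gives $G=O_{2'}(G)\rtimes P$ and $G'\le O_{2'}(G)$.
\item Since $N_G(P)/C_G(P)$ is a nontrivial odd-order subgroup of ${\rm Aut}(P)$, we get $P\cong\mathbb{Z}_2\times\mathbb{Z}_2$, and some $t\in N_G(P)$ permutes $P^{\#}$ transitively.
\item Put $K=O_{2'}(G)$. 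The Fitting subgroup of $G/K$ is a $2$-group containing its centraliser. It must equal $PK/K$, since one of order at most $2$ would be all of $G/K$. Hence $PK\trianglelefteq G$.
\item For every normal subgroup $X\le K$ of $G$, the subgroup $C_P(X)=P\cap C_G(X)$ is $t$-invariant, hence equal to $1$ or $P$.
\item Suppose $C_P(K)=1$. Then $P$ acts faithfully on $F(K)$, because a coprime automorphism group centralising $F(K)$ centralises $K$. By (4) it then acts faithfully on some Sylow subgroup $Q\cong\mathbb{Z}_r\times\mathbb{Z}_r$ of $F(K)$. But every Klein four subgroup of ${\rm GL}(2,r)$, $r$ odd, contains the central element $-I$, so no element can permute its involutions transitively. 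This contradicts (2).
\item Hence $[K,P]=1$, so $P$ is characteristic in $PK=K\times P$, and therefore $P\trianglelefteq G$.
\end{enumerate}
From there your argument works. $H'$ has odd $3'$-order and $|{\rm Aut}(P)|=6$, so $[H',P]=1$. Since $PH'/P=(G/P)'$, we get $P\times H'\trianglelefteq G$. Finally, $N$ is a complement to $P\times H'$ by Dedekind.
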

    
    \begin{proof}
    	Let $F$ be the Fitting subgroup of $H$. Let $\{p_1,\ldots,p_k\}$ be the set of all prime divisors of $|F|$ and let $F = P_1 \times \cdots \times P_k$ where $P_i$ is a Sylow $p_i$ subgroup of $F$. Then $H/F$ is embedded in $\prod_{i=1}^{k} {\rm Aut}(P_i)$. If is clear form Corollary \ref{cor_to_subgrps_of_GL} that $H/F$ is abelian and hence $H' \leq F$. Thus by Lemma \ref{Strctre_of_sol_A_grps}, $H = H' \rtimes N$ with $H'$ and $N$  abelian. Let $P$ be the Sylow $3$-subgroup of $H$. Let $r \neq 3$ be a prime dividing $|H|$ and let $R$ be a Sylow $r$-subgroup of $H$. Then $r \nmid |{\rm Aut}(P)|$. Thus by Burnside's compliment Theorem $P$ normalizes $R$ and hence $P$ conjugates to a subgroup of $N$. 
    	
    	\medskip
    	Now if $N'$ is another compliment of $H'$ in $H$ then it is not difficult to see that $N'$ is also a system normalizer of $H$. Therefore $N'$ is conjugate to $N$ in $H$. 
    	
    	\medskip
    	Part ${\rm (ii)}$ of this result follows from \cite[Lemma 3.8]{QL2011}.
    \end{proof}
    
    \begin{rem}
    	The structure of non-solvable cube-free groups will be discussed in the next section.
    \end{rem}
    
    Now we investigate the structure of quartic-free $A$-groups whose order divides only two primes. First we consider the case when $(p,r) \neq (2,3)$.  
    
    \begin{lemm}{\label{2-primes}}
    	Let $p$ and $r$ be distinct primes with $p < r$ and $(p,r) \neq (2,3)$. Let $G$ be a quartic-free $A$-group of order $p^\alpha r^\beta$. Let $P$ be a Sylow $p$-subgroup of $G$ and $R$ be a Sylow $r$-subgroup of $G$. Then either $G = P \rtimes R$ or $G = R \rtimes P$.
    \end{lemm}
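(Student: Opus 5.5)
The plan is to argue by contradiction, using the Fitting subgroup together with the order of the automorphism group of an abelian $p$-group of rank at most $3$.

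First, $G$ is solvable by Burnside's $p^\alpha q^\beta$ theorem. Let $F$ be its Fitting subgroup. Then $F = O_p(G) \times O_r(G)$, and $C_G(F) \le F$. Since the Sylow subgroups of $G$ are abelian, $F$ is abelian, so $C_G(F) = F$.

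\emph{Step 1: both $O_p(G)$ and $O_r(G)$ are nontrivial.} Suppose $O_p(G)=1$. Then $F=O_r(G)$ is contained in the abelian group $R$, so $R$ centralises $F$. Hence $R \le C_G(F)=F$, which gives $F=R$ and $R\trianglelefteq G$, so $G = R\rtimes P$. Symmetrically, if $O_r(G)=1$ then $P\trianglelefteq G$ and $G = P \rtimes R$. So we may assume both are nontrivial.

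\emph{Step 2: $r$ divides $|{\rm Aut}(O_p(G))|$.} We may assume $R \not\le F$, since otherwise $R=O_r(G)$ is normal and we are done. Because $R$ is abelian, it centralises $O_r(G)\le R$. Hence the kernel of the action of $R$ on $O_p(G)$ centralises all of $F$ and therefore lies in $F$. Consequently $R/(R\cap F)\neq 1$ embeds in ${\rm Aut}(O_p(G))$, and so $r$ divides $|{\rm Aut}(O_p(G))|$.

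\emph{Step 3: arithmetic on $|{\rm Aut}(O_p(G))|$.} Here $O_p(G)$ is abelian of order $p^k$ with $k\le 3$. If $O_p(G)$ is not elementary abelian, then it is cyclic or isomorphic to $\mathbb{Z}_{p^2}\times\mathbb{Z}_p$. By the preceding lemma on ${\rm Aut}(\mathbb{Z}_{p^2}\times\mathbb{Z}_p)$, the $p'$-part of its automorphism group divides $(p-1)^2$, and in the cyclic case it divides $p-1$. This is impossible, since $r>p$.

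So $O_p(G)\cong\mathbb{Z}_p^k$ and $r$ divides $|{\rm GL}(k,p)|$. The $p'$-part of this order divides $(p-1)(p^2-1)(p^3-1)$. Since $r>p$, we get $r\nmid p-1$. If $r\mid p+1$ then $r=p+1$, which forces $(p,r)=(2,3)$, the excluded case; this is exactly where the hypothesis is used. Hence $r\mid p^2+p+1$. This rules out $k\le 2$, because $|{\rm GL}(2,p)|=p(p-1)^2(p+1)$ is not divisible by $r$. Therefore $k=3$, i.e. $|O_p(G)|=p^3$.

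\emph{Step 4: conclusion.} Since $|G|$ is quartic-free, $|P|\le p^3 = |O_p(G)|$. Hence $P = O_p(G)\trianglelefteq G$ and $G = P\rtimes R$, by Schur--Zassenhaus or simply because $P\cap R=1$ and $|PR|=|G|$.

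I expect the main obstacle to be Step 3: making the arithmetic airtight for the non-elementary abelian $O_p(G)$. There one needs the $p'$-part of $|{\rm Aut}|$, rather than only the $p'$-part of the induced ${\rm GL}$ action, and this is supplied by the earlier lemma on ${\rm Aut}(\mathbb{Z}_{p^2}\times\mathbb{Z}_p)$ together with the standard fact $|{\rm Aut}(\mathbb{Z}_{p^k})|=p^{k-1}(p-1)$.
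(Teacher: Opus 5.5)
Your proof is correct, but it takes a different route from the paper's.

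\textbf{The paper's argument.} It works locally with $N=N_G(R)$. Since $[G:N]\equiv 1 \pmod r$ and $[G:N]\mid p^{\alpha}$ with $\alpha\le 3$, the index $p$ is excluded by $r>p$. The index $p^2$ is excluded by $(p,r)\neq(2,3)$, which is the same arithmetic as your Step~3. In the remaining case $[G:N]=p^3$ one has $N_G(R)=R$, which is abelian, so Burnside's normal complement theorem gives $G=P\rtimes R$. This surviving case corresponds exactly to your case $k=3$, $r\mid p^2+p+1$.

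\textbf{What each route costs and gives.} The paper's route is shorter and needs less. It uses only that $R$ is abelian and $\alpha\le 3$. It does not need solvability, so no $p^\alpha q^\beta$ theorem. It also does not need $P$ to be abelian, nor the value of $|{\rm Aut}(\mathbb{Z}_{p^2}\times\mathbb{Z}_p)|$.

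Your route goes through the self-centralising abelian Fitting subgroup and the faithful action of $R/O_r(G)$ on $O_p(G)$. It costs those extra ingredients but returns more structure. Whenever $R$ is not normal, you get that $P=O_p(G)$ is elementary abelian of order $p^3$ and that $R/O_r(G)$ embeds in ${\rm GL}(3,p)$ with $r\mid p^2+p+1$. This is the kind of information the paper's later analysis, via the embedding of $G/F$ into $\prod_i {\rm Aut}(P_i)$, is after.

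Your version also avoids the paper's index-$p$ case. The paper dispatches that case with an unjustified sentence, although it is vacuous because $r\nmid p-1$.
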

   \begin{proof}
   	Let $N = N_G(R)$. Then by Sylow's theorem $[G: N] \equiv 1\bmod{r}$. If $[G:N] = 1$ then $R \trianglelefteq G$ and $G = R \rtimes P$.
   	If $[G: N] = p$ then $N \trianglelefteq G$ and therefore $G = R \rtimes P$. 
   	If $\alpha > 1$ and $[G: N] = p^2$ then $r \mid p^2 - 1$. Since $r > p$ and $(p,r) \neq (2,3)$ this case is not possible.
   	If $\alpha = 3$ and $[G : N] = p^3$ then $N = R$ and by Burnside's compliment theorem \cite[Theorem 10.1.8]{R1995} we have $G = P \rtimes R$. 
   \end{proof}  
   
     Now we deal with the case $(p,r) = (2,3)$. This case require case by case analysis of automorphism group of Sylow $2$-subgroup of $G$.
    
    \begin{lemm}
    	Let $G$ be a quartic-free $A$-group of order $2^\alpha 3^\beta$. Let $P$  and $R$ be Sylow $2$ and Sylow $3$-subgroups of $G$ respectively. Then one of the following holds.
    	\begin{enumerate}[$(\rm i)$]
    		\item $G = R \rtimes P$.
    		\item $G = P \rtimes Q$ where $P \in \{\mathbb{Z}_2 \times \mathbb{Z}_2, \mathbb{Z}_2 \times \mathbb{Z}_2\times \mathbb{Z}_2\}$.
    		\item  $P \cong  \mathbb{Z}_2 \times \mathbb{Z}_2$ and $G \cong \mathbb{Z}_9 \times (A_4 \times \mathbb{Z}_3)$.
    		\item $P \cong \mathbb{Z}_2 \times \mathbb{Z}_2\times \mathbb{Z}_2$ and $G \cong  (\mathbb{Z}_9 \rtimes \mathbb{Z}_2) \times (A_4 \times \mathbb{Z}_3)$.  
    		
    		 
    		\item  $P \cong  \mathbb{Z}_2 \times \mathbb{Z}_2$ and $G \cong (\mathbb{Z}_3 \times \mathbb{Z}_3) \times (A_4 \times \mathbb{Z}_3)$.
    		\item  $P \cong \mathbb{Z}_2 \times \mathbb{Z}_2\times \mathbb{Z}_2$ and $G \cong  (\mathbb{Z}_3 \times \mathbb{Z}_3) \rtimes \mathbb{Z}_2) \times (A_4 \times \mathbb{Z}_3)$. 
    		\item $G \cong (\mathbb{Z}_2 \times \mathbb{Z}_2) \rtimes (R \rtimes \mathbb{Z}_2)$ where $P \cong \mathbb{Z}_2 \times \mathbb{Z}_2\times \mathbb{Z}_2$.
    	\end{enumerate} 
    \end{lemm}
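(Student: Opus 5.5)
We would run the same argument as in Lemma \ref{2-primes}, now with $p=2$ and $r=3$, and study the index $[G:N_G(R)]$. Put $N=N_G(R)$. By Sylow's theorem $[G:N]\equiv 1 \pmod 3$, and $[G:N]$ divides $|P|=2^\alpha$ with $\alpha\le 3$. So $[G:N]\in\{1,4\}$; the indices $2$ and $8$ are $\equiv 2 \pmod 3$ and cannot occur.

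If $[G:N]=1$, then $R\trianglelefteq G$ and Schur--Zassenhaus gives $G=R\rtimes P$, which is case (i). So we may assume $[G:N]=4$ and $R$ is not normal.

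In that case we turn to the Sylow $2$-subgroup. Since $G$ is solvable, $F=F(G)=O_2(G)\times O_3(G)$. Since $R$ is not normal, $O_2(G)\neq 1$, and $G/C_G(O_2(G))$ must contain an element of order $3$ acting nontrivially on $O_2(G)$. The abelian $2$-groups of order at most $8$ are $\mathbb{Z}_2,\mathbb{Z}_4,\mathbb{Z}_8,\mathbb{Z}_2\times\mathbb{Z}_2,\mathbb{Z}_4\times\mathbb{Z}_2$ and $\mathbb{Z}_2^3$. Their automorphism groups have $3$-part trivial except for $\mathbb{Z}_2\times\mathbb{Z}_2$ (automorphism group $S_3$) and $\mathbb{Z}_2^3$ (automorphism group ${\rm GL}(3,2)$, of order $168$, with cyclic Sylow $3$-subgroup of order $3$); this uses the lemma on ${\rm Aut}(\mathbb{Z}_{p^2}\times\mathbb{Z}_p)$ with $p=2$. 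Hence $O_2(G)\in\{\mathbb{Z}_2\times\mathbb{Z}_2,\mathbb{Z}_2^3\}$, and $P$ is either $O_2(G)$ or $\mathbb{Z}_2^3$ containing $O_2(G)=V_4$. In both situations only a $\mathbb{Z}_3$ quotient of $R$ acts on $O_2(G)$, and the kernel $R_0=C_R(O_2(G))$ has index $3$ in $R$.

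Now we split into cases according to whether $P=O_2(G)$ and according to how $R_0$ and $P$ interact.

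(a) If $P=O_2(G)$, then $G=P\rtimes R$, which is case (ii).

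(b) If $P\cong\mathbb{Z}_2^3$ and $O_2(G)=V_4$, we take a $2$-element $t\in P\setminus O_2(G)$ and examine $G/O_2(G)$, which has order $2\cdot 3^\beta$. Its Sylow $3$-subgroup is normal, because $O_2(G/O_2(G))$ is trivial once we show $O_2(G)$ is the full $2$-core. This should yield $G\cong V_4\rtimes(R\rtimes\mathbb{Z}_2)$, which is case (vii), or a splitting of the form in cases (iii)--(vi).

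To obtain the explicit direct-product cases, we compare $R_0$ with $R$. The subgroup $R_0$ centralizes $O_2(G)$, and $R=R_0\times\langle x\rangle$ with $|R|\le 27$, since $R$ is abelian. Hence $R\in\{\mathbb{Z}_9\times\mathbb{Z}_3,\ \mathbb{Z}_3^3\}$, or $R$ is smaller. The factor $V_4\rtimes\langle x\rangle\cong A_4$ together with a complement gives the factor $A_4\times\mathbb{Z}_3$. The remaining $3$-part is $\mathbb{Z}_9$ or $\mathbb{Z}_3\times\mathbb{Z}_3$, possibly inverted by the extra involution, and this produces (iii)--(vi).

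The main obstacle is this last enumeration. We must pin down the action of the extra involution $t$ on $R$, and show that the decomposition genuinely splits as a direct product. That requires checking that $t$ centralizes $A_4\times\mathbb{Z}_3$ modulo the appropriate subgroup. We would first use that $C_G(O_2(G))\le O_2(G)\times O_3(G)$ to force $R_0\le O_3(G)$, and then apply Maschke's theorem to the action of $\langle t\rangle$ on $O_3(G)$.
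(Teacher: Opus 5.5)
\textbf{There is a genuine gap, in case (b).}

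Your opening reduction is sound, and it takes a different route from the paper, which splits on $\beta$ and uses the coset actions $G\to S_3$, $G\to S_4$ with Frattini arguments. The steps that work:
\begin{itemize}
\item $[G:N_G(R)]\in\{1,4\}$.
\item If $R$ is not normal, $R$ must act nontrivially on $O_2(G)$. This needs a one-line justification: otherwise $R$, being abelian and containing $O_3(G)$, would centralize $F(G)=O_2(G)\times O_3(G)$. Then $C_G(F(G))\le F(G)$ forces $R=O_3(G)$.
\item Hence $O_2(G)\in\{V_4,\mathbb{Z}_2^3\}$, and case (a) gives (ii).
\item You should also rule out $P\cong\mathbb{Z}_4\times\mathbb{Z}_2\supseteq V_4$ explicitly. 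Since ${\rm Aut}(P)$ is a $2$-group, Burnside's transfer theorem makes $R$ normal.
\end{itemize}

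Case (b) is where all the content of the lemma lies, and there you only say ``this should yield (vii) or (iii)--(vi)''. That is not an argument, and the completion you sketch does not work.
\begin{itemize}
\item The claim $R=R_0\times\langle x\rangle$ ``since $R$ is abelian'' is false. Take $V_4\rtimes\mathbb{Z}_9$ with $\mathbb{Z}_9$ acting through $\mathbb{Z}_9\to\mathbb{Z}_3$: there $R_0\cong\mathbb{Z}_3$ has no complement in $R$.
\item More seriously, the groups in (iii)--(vi) have order $2^2\cdot 3^4$ or $2^3\cdot 3^4$. They are not quartic-free, so no argument under the hypotheses can produce them. The ``main obstacle'' you identify is aimed at configurations that cannot occur.
\end{itemize}

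\textbf{The missing step} is short, and it shows that (b) always lands in (vii). Put $V=O_2(G)\cong V_4$; recall $[G:N_G(R)]=4$ and $P\cong\mathbb{Z}_2^3$.
\begin{itemize}
\item If $v\in V\cap N_G(R)$, then for every $r\in R$ we have $[v,r]\in V\cap R=1$, since $V\trianglelefteq G$ and $v$ normalizes $R$. So $v\in C_V(R)$.
\item $R$ acts on $V$ through a subgroup of order $3$ of ${\rm Aut}(V)\cong S_3$, which permutes the three involutions of $V$ cyclically. Hence $C_V(R)=1$.
\item Therefore $|V\,N_G(R)|=4\,|N_G(R)|=|G|$, so $G=V\rtimes N_G(R)$ with $|N_G(R)|=2\cdot 3^\beta$.
\item Schur--Zassenhaus gives $N_G(R)=R\rtimes\mathbb{Z}_2$, so $G\cong(\mathbb{Z}_2\times\mathbb{Z}_2)\rtimes(R\rtimes\mathbb{Z}_2)$. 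This is (vii).
\end{itemize}
With this step inserted, your $O_2(G)$-based approach proves the lemma. In fact it shows that only (i), (ii) and (vii) ever occur. No analysis of $R_0$, no Maschke argument, and no study of the involution $t$ acting on $O_3(G)$ is needed.
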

    \begin{proof}
    	If $P \not \in \{\mathbb{Z}_2 \times \mathbb{Z}_2, \mathbb{Z}_2 \times \mathbb{Z}_2\times \mathbb{Z}_2\}$ then ${\rm Aut}(P)$ is a $2$-group. Therefore $N_G(P) = C_G(P)$ and by Burnside's compliment Theorem $G = R \rtimes P$. So assume that $P$ is elementary abelian. Let $N = N_G(R)$. If $[G:N] \neq 4$ then as in the proof Lemma \ref{2-primes} either $G = R \rtimes P$ or $G = P \rtimes R$. Suppose that $[G: N] = 4$.
    	
    	\medskip
    	If $\beta = 1$ then $[G : N_G(P)] \in \{1,3\}$. If $[G: N_G(P)] = 1$ then $P \trianglelefteq G$ and $G = P \rtimes R$. If $[G : N_G(P)] = 3$ then $N_G(P) = C_G(P)$ and $G = R \rtimes P$.
    	
    	\medskip
    	If $\beta = 2$ and $[G:N_G(P)] \neq 3$ then the above arguments can be repeated to show that $G = P \rtimes R$ or $G = R \rtimes P$. Suppose  $[G: N_G(P)] = 3$ then there is a homomorphism $\phi:G \rightarrow S_3$ with $\ker(\phi) \leq N_G(P)$. If $8\mid |\ker(\phi)|$ then $P \leq \ker(\phi)$ and $G = P \rtimes R$. Thus $4 \mid \ker(\phi)$ and by Frattini's argument $G = P\cap \ker(\phi) \rtimes N_G(R) \cong (\mathbb{Z}_2 \times \mathbb{Z}_2) \rtimes (R \rtimes \mathbb{Z}_2)$.
    	
    	\medskip
    	If $\beta = 3$ and $[G: N_G(P)] \neq 9$ then the structure of $G$ can be obtained as in previous cases. So we assume that $[G: N_G(P)] = 9$ and $[G:N] = 4$. As $[G: N] = 4$ there is a homomorphism $\psi: G \rightarrow S_4$ with $\ker(\psi) \leq N$. If $R \leq \ker(\psi)$ then $G = R \rtimes P$. So assume that $R$ is not contained in $\ker(\psi)$. Then $K = P\ker(\phi)$ is a subgroup of $G$ of index $3$. Thus there is a homomorphism $\eta: G \rightarrow S_3$ such that $\ker(\eta) \leq K$. If $\ker(\eta) = K$ and $P \trianglelefteq K$. Then $G = P\rtimes R$. If $P$ is not normal in $K$. Then by Frattini's argument we have $G = KN_G(P) = \hat{R} \rtimes N_G(P)$ where $\hat{R}$ is a Sylow $3$-subgroup of $K$ of order $9$. Thus either $G \cong \mathbb{Z}_9 \rtimes (P \rtimes \mathbb{Z}_3)$ or $G \cong (\mathbb{Z}_3 \times \mathbb{Z}_3) \rtimes (P \rtimes \mathbb{Z}_3)$.
    	
    	\medskip
    	So assume that $\ker(\eta) \lneq K$. Then $M = R\ker(\eta)$ is a subgroup of $G$ of index $2$. Thus $M \trianglelefteq G$. If $R \trianglelefteq M$ then $G = R \rtimes P$. Otherwise by Frattini's argument $G = MN_G(R) \cong (\mathbb{Z}_2 \times \mathbb{Z}_2) \rtimes (R \rtimes \mathbb{Z}_2)$ this completes the proof.     
    \end{proof}
    
    \begin{lemm}
    	Let $G$ be a quartic-free $A$-group of even order. Let $P$ be a Sylow $2$-subgroup of $G$ and let $P \not \in \{{\mathbb Z}_2\times \mathbb{Z}_2, {\mathbb Z}_2\times \mathbb{Z}_2 \times \mathbb{Z}_2\}$. Then $G = H \rtimes P$ where $H$ is a Hall $2'$-subgroup of $G$.
    \end{lemm}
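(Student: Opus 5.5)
Since $P$ is abelian (because $G$ is an $A$-group) and $|P|\le 2^3$, the plan is to show that ${\rm Aut}(P)$ is a $2$-group whenever $P\notin\{\mathbb{Z}_2\times\mathbb{Z}_2,\ \mathbb{Z}_2\times\mathbb{Z}_2\times\mathbb{Z}_2\}$, and then apply Burnside's complement theorem \cite[Theorem 10.1.8]{R1995}. This is the same reasoning used at the start of the proof of the preceding lemma for groups of order $2^\alpha3^\beta$.

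\textbf{Step 1: ${\rm Aut}(P)$ is a $2$-group.} The abelian groups of order $2^\alpha$ with $\alpha\le 3$ are $\mathbb{Z}_2,\mathbb{Z}_4,\mathbb{Z}_8,\mathbb{Z}_2\times\mathbb{Z}_2,\mathbb{Z}_4\times\mathbb{Z}_2$ and $\mathbb{Z}_2\times\mathbb{Z}_2\times\mathbb{Z}_2$. Removing the two excluded groups leaves the cyclic ones and $\mathbb{Z}_4\times\mathbb{Z}_2$.
\begin{itemize}
\item For cyclic $P$ of order $2^\alpha$, $|{\rm Aut}(P)|=2^{\alpha-1}$.
\item For $P=\mathbb{Z}_4\times\mathbb{Z}_2$, the lemma on ${\rm Aut}(\mathbb{Z}_{p^2}\times\mathbb{Z}_p)$ proved above gives $|{\rm Aut}(P)|=p^3(p-1)^2=8$ when $p=2$.
\end{itemize}
So in every remaining case ${\rm Aut}(P)$ is a $2$-group.

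\textbf{Step 2: $P$ is central in its normaliser.} The quotient $N_G(P)/C_G(P)$ embeds in ${\rm Aut}(P)$, so it is a $2$-group. Because $P$ is abelian, $P\le C_G(P)$. Hence $N_G(P)/C_G(P)$ is a $2$-group whose order divides $[N_G(P):P]$, which is odd since $P$ is a Sylow $2$-subgroup. Therefore $N_G(P)=C_G(P)$, that is, $P\le Z(N_G(P))$.

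\textbf{Step 3: conclusion.} Burnside's complement theorem now gives a normal $2$-complement $H$, and $H$ is a Hall $2'$-subgroup of $G$. Since $H\cap P=1$ and $HP=G$, we get $G=H\rtimes P$.

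No step is a serious obstacle. The only point needing care is the case list in Step 1: it relies on $|P|\le 2^3$, which holds because $G$ is quartic-free, and on $P$ being abelian, which holds because $G$ is an $A$-group.
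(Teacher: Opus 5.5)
Your proof is correct and is essentially the paper's argument. You show that ${\rm Aut}(P)$ is a $2$-group, deduce $N_G(P)=C_G(P)$, and apply Burnside's complement theorem to get $G=H\rtimes P$. The paper's proof is only a terse sketch whose last step is phrased as ``$P$ normalizes the Sylow system of a Hall $2'$-subgroup''; your explicit case list and Burnside step match the opening lines of the paper's proof of the preceding lemma on groups of order $2^\alpha3^\beta$.
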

    \begin{proof}
    	Since $P$ is not elementary abelian ${\rm Aut}(P)$ is a $2$-group. Thus $P$ normalizes the Sylow system of a Hall $2'$-subgroup of $G$. Hence $G = H \rtimes P$.
    \end{proof}

\section{Non-solvable quartic-free A-groups}
In this section we prove Theorem \ref{Main_theorem_2}. We begin by classifying the simple quartic-free $A$-groups. Here we assume that the classification of finite simple groups holds.  
    
    \begin{lemm}
    	Let $G$ be a simple quartic-free $A$ group. Then $G \cong {\rm PSL}(2,q)$ for some $q = p^\alpha$ with $\alpha \in \{ 1,2,3\}$ and $q+1$ and $q-1$ quartic-free or $G \cong J_1$ first Janko group of order $2^3.3.5.7.11.19$.
    \end{lemm}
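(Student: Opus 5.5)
The plan is to reduce to the known classification of finite simple groups with abelian Sylow $2$-subgroups, and then impose the quartic-free condition. Since $G$ is a non-abelian simple $A$-group, its Sylow $2$-subgroup is abelian and nontrivial (Feit--Thompson). The theorem of Walter (completed by Bender) then says $G$ is one of ${\rm PSL}(2,q)$ with $q \equiv 3,5 \pmod 8$ or $q=2^n$, $J_1$, ${}^2G_2(3^{2m+1})$ with $m\ge 1$, or ${\rm PSL}(2,q)$ in general when abelian Sylow $2$-subgroups occur. Equivalently, I would invoke the classification of simple $A$-groups: Walter's theorem combined with the requirement that every Sylow $p$-subgroup be abelian. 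Each candidate family is then checked against the conditions ``all Sylow subgroups abelian'' and ``$p^4\nmid |G|$ for every prime $p$''.

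The families are handled one at a time.

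1. For ${}^2G_2(q)$ with $q=3^{2m+1}$, $m\ge1$, the $3$-part of the order is $q^3\ge 3^9$. This is excluded; its Sylow $3$-subgroup is also non-abelian.

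2. For $J_1$, of order $2^3\cdot 3\cdot5\cdot7\cdot11\cdot19$, the group is quartic-free. Its Sylow $2$-subgroup is elementary abelian of order $8$ and the other Sylows are cyclic of prime order, so $J_1$ is kept.

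3. For ${\rm PSL}(2,q)$ with $q=p^\alpha$, the order is $q(q^2-1)/\gcd(2,q-1)$. The Sylow $p$-subgroup is elementary abelian of order $q$, so quartic-freeness forces $\alpha\le 3$.

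4. For odd primes $r\ne p$ dividing $q^2-1$, the Sylow $r$-subgroups are cyclic. They lie in the dihedral subgroups of order $q\pm1$ (over $\gcd$), so the real constraint is $r^4\nmid q\pm1$. Since $\gcd(q-1,q+1)\le 2$, this amounts to $q-1$ and $q+1$ being quartic-free at odd primes.

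5. At the prime $2$ for odd $q$, the Sylow $2$-subgroup is dihedral of order $(q^2-1)_2/2$. It is abelian only when it has order $4$, i.e. $q\equiv 3,5\pmod 8$. In that case $q\pm1$ is $2$ times an odd number or $4$ times an odd number, hence not divisible by $16$.

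Collecting these gives the stated list, with the condition phrased as $q\pm1$ quartic-free.

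The main obstacle is the reliance on the full list of simple groups with abelian Sylow $2$-subgroups, which is Walter's theorem. I will cite it rather than reprove it, since the paper explicitly assumes the classification of finite simple groups. A secondary point is the phrasing at the prime $2$. For $q$ odd the abelian-Sylow-$2$ condition is really $q\equiv\pm3\pmod 8$, which is stronger than ``$q\pm1$ quartic-free''. I would state the necessary conditions as derived and note that the lemma's list is meant as the candidate set under those conditions.
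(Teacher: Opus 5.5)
Your proposal is correct and takes the same route as the paper: quote Walter's classification of simple groups with abelian Sylow $2$-subgroups, then filter that list by the $A$-group and quartic-free conditions. Your version is more careful than the paper's one-line proof, which omits the Ree groups ${}^2G_2(3^{2m+1})$ (excluded since $3^9$ divides their order) and never checks $\alpha\le 3$ or that $q\pm 1$ is quartic-free; and because the lemma is only a one-way implication, your stronger condition $q\equiv \pm 3 \pmod 8$ for odd $q$ proves it outright, so no hedging is needed (indeed ${\rm PSL}(2,7)$ shows the stated conditions are not sufficient).
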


    \begin{proof}
    	By \cite{W1969} a simple non-abelian group with abelian Sylow-2 subgroup is isomorphic to $J_1$ or $L_2(q)$ for $q>3$ and $q \equiv 0,3$ or $5\bmod{8}$. But by \cite[page 191-214]{H1968}, $L_2(q) = {\rm PSL}(2,q)$.   
    \end{proof}
     
     Now we prove Theorem \ref{Main_theorem_2}. That is we now discuss the structure of a non-solvable quartic-free $A$-group.
     
     Recall that we denote ${\rm sol}(G)$ to be the largest normal solvable subgroup of a group $G$.
      
    \medskip
    \noindent
    {\bf Proof of Theorem 2.} Suppose that the statement of the Theorem is not true. Let $G$ be a minimal counter example and let $H = {\rm sol}(G)$. If $H ={1}$ then $G$ is semi-simple (simple in our case) and the result holds trivially. So assume that $H \not = {1}$ and let $N$ be the minimal normal subgroup of $G$ contained in $H$. Then $N$ is elementary abelian. By the minimal choice of $G$ we have $G/N = S/N \rtimes T/N$ where $S/N$ is solvable and $T/N$ is non-abelian simple. If $G \not = T$ then by minimal choice of $G$ we have $T = N \rtimes U$ where $U$ is non-abelian simple. Consequently $G = S \rtimes U$ contrary to the assumption. Thus $G = T$ and $G/N$ is simple. Therefore $G/N = (G/N)' = G'N/N$. If $G' \lneq G$ then by minimality $G' = R \rtimes K$ where $R$ is solvable and $K$ is simple. Consequently $G = (NR) \rtimes K$ contradicting that $G$ is a counter example. So we assume that $G = G'$. Since $G/N$ is semi-simple we have $C_G(N) = N$ or $C_G(N) = G$. If $C_G(N) = G$ then $Z(G) = N$ contradicting \cite[Theorem 10.1.7]{R1995}. Hence $C_G(N) = N$ but then ${\rm gcd}(|N|,[G:N]) = 1$ and by Schur-Zassenhaus \cite[Theorem 9.1.2]{R1995},  $G = N \rtimes M$ where $M$ is simple this contradiction  completes the proof.     


\begin{thebibliography}{50}
		\bibitem{BNV2007} S. R. Blackburn, P. M. Neumann, G. Venkataraman, \emph{Enumeration of finite Groups}, {Cambridge University Press}, 2007.
		\bibitem{DE2005} H. Dietrich, B. Eick, `On the group of cube-free order', \emph{J. Algebra} 292 (2005) 122-137.
		\bibitem{DL2021} H. Dietrich and D. Low, ``Generation of finite groups with cyclic Sylow Subgroups", \emph{Journal of Group Theory} {\bf 24} (2021) 161-175.
		\bibitem{H1968} B. Huppert, ``Endliche Gruppen. I", Springer, Berlin, 1968.
		\bibitem{H1893} Otto H$\ddot{\text{o}}$lder, `Die Gruppen der Ordnungen $p^3$, $pq^2$, $pqr$, $p^4$', \emph{Math. Annalen} (1893) 301-412.
		\bibitem{H1895} Otto H$\ddot{\text{o}}$lder, `Die Gruppen mit quadratfreier Ordnungszahl', \emph{Nachr. Gesellsch. Wiss. zu G$\ddot{\text{o}}$ttingen. Math.-phys. Klasse} (1895) 211--229.
		\bibitem{J1966} Z. Janko, ``A new finite simple group with abelian Sylow 2-subgroups and its characterization, {\emph{J. Algebra}} {\bf 3} (1966), 147-186.
		
		\bibitem{QL2011} S. Qiao and C. H. Li, `The finite groups of cube-free order', \emph{J. Algebra} {\bf 334} (2011) 101-108. 
		\bibitem{R1995} D. J. S. Robinson, \emph{A Course in the Theory of Groups}, Springer, New York, 1982 (Second Edition).
		\bibitem{S1992} M. W. Short, \emph{The Primitive Soluble Permutation Groups of Degree less than 256}, Springer-Verlag Heidelberg 1992.
		\bibitem{T1949} D. Taunt ``On A-groups", {\emph{Proc. Cambridge Philos. Soc.}} {\bf 45} (1949) 24-42.
		\bibitem{GV1997} Geetha Venkatraman, `Enumeration of finite soluble groups with abelian Sylow subgroups', \emph{Quart. J. Math. Oxford} {\bf 2} (1997) 107--125.
		\bibitem{W1969} J. H. Walter, ``The characterization of finite groups with abelian Sylow 2-subgroups, {\emph{Ann. of Math.}} (2) {\bf 89} (1969), 405-514.
		
	\end{thebibliography}
\end{document}